\newtheorem{theorem}{Theorem}
\newtheorem{corollary}{Corollary}
\newtheorem{lemma}{Lemma}
\newtheorem{remark}{Remark}
\newtheorem{conjecture}{Conjecture}
\newtheorem{proposition}{Proposition}
\newenvironment{proof}[1][Proof]{\noindent\textbf{#1.} }{\ \rule{0.5em}{0.5em}}
\begin{document}

\title{Means and nonreal Intersection Points of Taylor Polynomials}
\author{Alan Horwitz \\
%EndAName
Professor Emeritus of Mathematics\\
Penn State Brandywine\\
25 Yearsley Mill Rd.\\
Media, PA 19063\\
alh4@psu.edu}
\date{2/4/15}
\maketitle

\begin{abstract}
Suppose that $f\in C^{r+1}(0,\infty )$, and let $P_{c}$ denote the Taylor
polynomial to $f$ of order $r$ at $x=c\in \lbrack a,b]$. In \cite{H1} it was
shown that if $r$ is an odd whole number and $f^{(r+1)}(x)\neq 0$ on $[a,b]$%
, then there is a unique $x_{0},a<x_{0}<b$, such that $%
P_{a}(x_{0})=P_{b}(x_{0})$. This defines a mean $M_{f}^{r}(a,b)\equiv x_{0}$%
. In this paper we discuss the \textbf{real parts} of the pairs of complex
conjugate \textbf{nonreal} roots of $P_{b}-P_{a}$. We prove some results for 
$r$ in general, but our most significant results are for the case $r=3$. We
prove in that case that if $f(z)=z^{p}$, where $p$ is an \textbf{integer}, $%
p\notin \left\{ 0,1,2,3\right\} $, then $P_{b}-P_{a}$ has nonreal roots $%
x_{1}\pm iy_{1}$, with $a<x_{1}<b$ for any $0<a<b$. This defines the
countable family of means $M_{z^{p}}^{3}(a,b)$, where $p=n\in 
%TCIMACRO{\U{2124} }%
%BeginExpansion
\mathbb{Z}
%EndExpansion
-\left\{ 0,1,2,3\right\} $. We construct a cubic polynomial, $g$, whose real
root gives the real part of the pair of complex conjugate nonreal roots of $%
P_{b}-P_{a}$. Instead of working directly with a formula for the roots of a
cubic, we use the Intermediate Value Theorem to show that $g$ has a root in $%
(a,b)$.

\textbf{Key Words:} mean, Taylor polynomial, nonreal roots
\end{abstract}

\section{Introduction}

Suppose that $f\in C^{r+1}(0,\infty )$, and let $P_{c}$ denote the Taylor
polynomial to $f$ of order $r$ at $x=c>0$. In (\cite{H1}, Theorem 1.1) it
was proved that if $r$ is an odd whole number and $f^{(r+1)}(x)\neq 0$ on $%
[a,b]$, $0<a<b$, then there is a unique real number $x_{0},a<x_{0}<b$, such
that $P_{a}(x_{0})=P_{b}(x_{0})$. This, of course, defines a mean $%
M_{f}^{r}(a,b)\equiv x_{0}$. Further results and generalizations of the
means $M_{f}^{r}(a,b)$ were proved in \cite{H2}, where $r$ is any positive
integer, odd or even. The main purpose of this paper is to discuss the 
\textbf{real parts} of the pairs of complex conjugate \textbf{nonreal} roots
of $P_{b}-P_{a}$. In Proposition \ref{P1} below we show, for any \textit{odd}
whole number, $r$, and under suitable assumptions on $f$, that $P_{b}-P_{a}$
has precisely one real zero $x_{0},a<x_{0}<b$. We also show that for any 
\textit{even} whole number, $r$, $P_{b}-P_{a}$ has all nonreal zeros. The
main question is then:

What can we say about the real parts of the pairs of complex conjugate
nonreal roots of $P_{b}-P_{a}$ ? In particular, when do the real parts lie
strictly between $a$ and $b$ ? That is, what conditions on $f$ imply that
the real parts of the nonreal roots of $P_{b}-P_{a}$ define a mean ? We
cannot answer that question completely for $r$ in general, but we are able
to prove some results in \S\ \ref{S1}. If $f^{(r+1)}(x)$ is continuous and
has no zeros in $[a,b]$, then the \textbf{averages} of the pairs of complex
conjugate nonreal roots of $P_{b}-P_{a}$ lie strictly between $a$ and $b$
for any positive integer $r$(Proposition \ref{P2}). This, of course, does
not tell us what happens with the real parts of each \textbf{specific}
nonreal root. However, for $r=2$ one gets immediately that if $f\,^{\prime
\prime \prime }(x)\neq 0$ on $[a,b]$, and if $x_{1}\pm iy_{1}$ are the
nonreal roots of $P_{b}-P_{a}$, then $a<x_{1}<b$ for any $0<a<b$(Corollary %
\ref{C1} ). Also, if $f(z)=z^{r+1}$, then all of the nonreal roots of $%
P_{b}-P_{a}$ have real part given by the arithmetic mean $A(a,b)=\dfrac{a+b}{%
2}$(Proposition \ref{P4} ). Most of the more detailed and complex proofs in
this paper are in \S\ \ref{S2}, which involves the case $r=3$. We
prove(Theorem \ref{T1}) that if $f(z)=z^{p}$, where $p$ is an \textbf{integer%
}, $p\notin \left\{ 0,1,2,3\right\} $, then $P_{b}-P_{a}$ has nonreal roots $%
x_{1}\pm iy_{1}$, with $a<x_{1}<b$ for any $0<a<b$. This defines the
countable family of means $M_{z^{p}}^{3}(a,b)$, where $p\in 
%TCIMACRO{\U{2124} }%
%BeginExpansion
\mathbb{Z}
%EndExpansion
-\left\{ 0,1,2,3\right\} $. For example, if $p=-1$, one has $x_{1}=\dfrac{%
ab(a+b)}{a^{2}+b^{2}}$. We have proven some partial results when $p$ is 
\textbf{not} an integer, but Theorem \ref{T1} does \textbf{not }hold in
general for $p\in \Re $, $p\notin \left\{ 0,1,2,3\right\} $. For example,
let $p=\dfrac{3}{2}$, $a=1$, and $b=36$. Then $P_{36}(z)-P_{1}(z)$ has roots 
$6,\dfrac{33}{43}\pm \dfrac{15}{43}\sqrt{291}i$, so that $x_{0}<a$. In \S\ %
\ref{S3} we give some alternate proofs and some partial results, which are
perhaps interesting in their own right, and which also might lead to
determining what conditions on $f$ imply that the real parts of the pairs of
complex conjugate nonreal roots of $P_{b}-P_{a}$ lie in $(a,b)$. Finally we
consider possible future research in \S\ \ref{S4}.

\section{General $r$\label{S1}}

A result very similar to Proposition \ref{P1}(i) below was proved in \cite%
{H3}. Since the proof is short and we need some functions from that proof to
prove Proposition \ref{P1}(ii) as well as for some later material, we give
the full proof here. Assume that $0<a<b$ and that all functions $f$ are real
valued for the rest of the paper.

\begin{proposition}
\label{P1}Suppose that $f^{(r+1)}$ is continuous in some open interval
containing $[a,b]$ and has no zeros in $[a,b]$. Let $P_{c}$ denote the
Taylor polynomial to $f$ of order $r$ at $x=c$.

(i) If $r$ is an \textit{odd} positive integer, then $P_{b}-P_{a}$ has
precisely one real zero $x_{0},a<x_{0}<b$.

(ii) If $r$ is an \textit{even} positive integer then $P_{b}-P_{a}$ has all
nonreal zeros.
\end{proposition}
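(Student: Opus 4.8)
The key object is the difference of Taylor polynomials. Writing $P_c(x)=\sum_{k=0}^{r}\frac{f^{(k)}(c)}{k!}(x-c)^k$, one computes that $P_b(x)-P_a(x)$ is a polynomial in $x$ whose leading behaviour can be controlled, but the cleaner route is to differentiate. The plan is to show that the function $D(x)=P_b(x)-P_a(x)$ has a derivative of constant sign away from a single point, so that counting real zeros reduces to an elementary argument, and then to use the degree of $D$ together with complex-conjugate pairing to force all remaining zeros to be nonreal.

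First I would record the standard Taylor-remainder identity: for fixed $x$, $f(x)-P_c(x)=\frac{f^{(r+1)}(\xi)}{(r+1)!}(x-c)^{r+1}$ for some $\xi$ between $c$ and $x$ (valid since $f^{(r+1)}$ is continuous on an interval containing $[a,b]$). Hence
\[
D(x)=P_b(x)-P_a(x)=\bigl(f(x)-P_a(x)\bigr)-\bigl(f(x)-P_b(x)\bigr)=\frac{f^{(r+1)}(\xi_a)}{(r+1)!}(x-a)^{r+1}-\frac{f^{(r+1)}(\xi_b)}{(r+1)!}(x-b)^{r+1}.
\]
Better, I would differentiate: a direct computation (the ``telescoping'' of Taylor polynomials under differentiation) gives $P_c'(x)=Q_c(x)$, the Taylor polynomial of order $r-1$ to $f'$ at $c$, so $D'(x)=Q_b(x)-Q_a(x)$, and iterating, $D^{(r)}(x)=f^{(r)}(b)-f^{(r)}(a)$ is a nonzero constant (nonzero because $f^{(r+1)}$ has no zeros on $[a,b]$, so $f^{(r)}$ is strictly monotone). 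Thus $D$ is a polynomial of degree exactly $r$. Moreover $D^{(k)}(a)=0$ and $D^{(k)}(b)=0$ would each require care; the right bookkeeping is that $D$ vanishes to appropriate orders at $a$ and at $b$ in a way that, combined with Rolle's theorem, pins down the real zeros.

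For part (ii), the heart of the argument is: $D$ has degree $r$ (even), and by the remainder formula above, as $x\to\pm\infty$ the sign of $D(x)$ is governed by $\bigl(f^{(r+1)}(\xi_a)-f^{(r+1)}(\xi_b)\bigr)$-type data — more precisely, I would show $D(x)$ has no real zero at all by exhibiting that $D$ and all the intermediate derivatives interlace so that $D$ is of one sign on $[a,b]$ and, via the monotonicity of $f^{(r)}$, cannot change sign outside either. Concretely: from $f(x)-P_a(x)=\frac{f^{(r+1)}(\xi)}{(r+1)!}(x-a)^{r+1}$ with $r+1$ odd, and similarly at $b$, one sees the two remainders have opposite sign on $(a,b)$ when $f^{(r+1)}>0$ (since $(x-a)^{r+1}>0$ but $(x-b)^{r+1}<0$), forcing $D\neq0$ on $(a,b)$; and for $x<a$ or $x>b$ a parallel sign analysis, using that $f^{(r)}$ is strictly monotone, rules out zeros there. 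Since a real polynomial of even degree $r$ with no real roots factors entirely into conjugate quadratic pairs, all $r$ zeros of $D$ are nonreal. (Part (i), which I would dispatch first as a warm-up, is the same analysis with $r$ odd: the two remainders now have the \emph{same} sign outside $[a,b]$ and opposite signs give exactly one crossing inside, while strict monotonicity of the relevant derivative forbids further real zeros.)

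The main obstacle I anticipate is the sign analysis \emph{outside} $[a,b]$: inside $(a,b)$ the opposite-sign trick with $(x-a)^{r+1}$ and $(x-b)^{r+1}$ is immediate, but for $x<a$ both factors $(x-a)^{r+1}$ and $(x-b)^{r+1}$ are negative (when $r$ is even), so the difference of remainders is $\frac{1}{(r+1)!}\bigl[f^{(r+1)}(\xi_a)(x-a)^{r+1}-f^{(r+1)}(\xi_b)(x-b)^{r+1}\bigr]$ with $\xi_a\in(x,a)$, $\xi_b\in(x,b)$, and one must compare the magnitudes $|x-a|^{r+1}$ versus $|x-b|^{r+1}$ against the variation of $f^{(r+1)}$. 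I would handle this by instead working with $D^{(r-1)}(x)$, which is affine and manifestly has a single real zero, and then climbing back down: if $D^{(k+1)}$ has no real zero outside a controlled location, Rolle forces $D^{(k)}$ to have at most one more, and matching against the known vanishing orders of $D$ at $a$ and $b$ closes the induction. This derivative-ladder argument sidesteps the delicate magnitude comparison entirely and is, I expect, how the author proceeds.
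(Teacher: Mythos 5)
There is a genuine gap, and you have correctly located it yourself: the sign analysis outside $[a,b]$. Your Lagrange-remainder decomposition $D(x)=\frac{f^{(r+1)}(\xi_a)}{(r+1)!}(x-a)^{r+1}-\frac{f^{(r+1)}(\xi_b)}{(r+1)!}(x-b)^{r+1}$ is only available where $f$ is defined and $f^{(r+1)}$ is continuous, i.e.\ on a small open interval around $[a,b]$; the proposition is about \emph{all} real zeros of the polynomial $P_b-P_a$, and for $x$ far from $[a,b]$ the points $\xi_a,\xi_b$ escape the region where you control the sign of $f^{(r+1)}$ (indeed $f$ need not even be defined there). Your proposed repair does not close this: the derivative ladder via Rolle only yields that a degree-$r$ polynomial has at most $r$ real zeros, which is vacuous, and the ``known vanishing orders of $D$ at $a$ and $b$'' do not exist --- $D(a)=P_b(a)-f(a)=-E_b(a)\neq 0$ in general, and likewise at $b$. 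Also, in part (i) with $r$ odd the two remainders have the \emph{same} (nonnegative) sign on $(a,b)$, not opposite signs, so the crossing must come from the boundary values $E_a(a)=E_b(b)=0$ together with monotonicity of $E_a$ and $E_b$, not from a sign flip of the remainders.

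The missing idea is to use the \emph{integral} form of the remainder rather than the Lagrange form. Subtracting $E_c(x)=\frac{1}{r!}\int_c^x f^{(r+1)}(t)(x-t)^r\,dt$ at $c=a$ and $c=b$ collapses the two remainders into a single integral over the fixed interval,
\begin{equation*}
(P_b-P_a)(x)=\frac{1}{r!}\int_a^b f^{(r+1)}(t)(x-t)^r\,dt ,
\end{equation*}
and since both sides are polynomials agreeing on $[a,b]$, this identity holds for all $x\in\mathbb{R}$ (indeed $\mathbb{C}$). Now everything is immediate with $f^{(r+1)}>0$ WLOG: for $r$ even the integrand is nonnegative and not identically zero, so $P_b-P_a>0$ on all of $\mathbb{R}$ (part (ii)); for $r$ odd, differentiating gives $(P_b-P_a)'(x)=\frac{1}{(r-1)!}\int_a^b f^{(r+1)}(t)(x-t)^{r-1}\,dt>0$ on all of $\mathbb{R}$, so there is at most one real zero, and existence in $(a,b)$ follows from $E_a$ increasing from $0$ at $a$ and $E_b$ decreasing to $0$ at $b$. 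This representation is exactly what sidesteps the magnitude comparison you were worried about, and it is the engine for the rest of the paper as well.
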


\begin{proof}
Let $E_{c}(x)=f(x)-P_{c}(x),x\in \lbrack a,b]$. Note that $%
P_{b}(x)=P_{a}(x)\iff E_{b}(x)=E_{a}(x)$. By the integral form of the
remainder, we have \newline
$E_{c}(x)=\dfrac{1}{r!}\dint\limits_{c}^{x}f^{(r+1)}(t)(x-t)^{r}dt$, which
implies that 
\begin{eqnarray*}
E_{a}(x) &=&\dfrac{1}{r!}\dint\limits_{a}^{x}f^{(r+1)}(t)(x-t)^{r}dt \\
E_{b}(x) &=&-\dfrac{1}{r!}\dint\limits_{x}^{b}f^{(r+1)}(t)(x-t)^{r}dt\text{.}
\end{eqnarray*}

Since $(E_{a}-E_{b})(x)=\dfrac{1}{r!}\dint%
\limits_{a}^{b}f^{(r+1)}(t)(x-t)^{r}dt$ and $%
E_{a}(x)-E_{b}(x)=P_{b}(x)-P_{a}(x),x\in \lbrack a,b]$, we have 
\begin{equation}
(P_{b}-P_{a})(x)=\dfrac{1}{r!}\dint\limits_{a}^{b}f^{(r+1)}(t)(x-t)^{r}dt%
\text{.}  \label{Pab}
\end{equation}%
Since (\ref{Pab}) holds for $x\in \lbrack a,b]$ and $P_{b}-P_{a}$ is a
polynomial, (\ref{Pab}) holds for all $z\in 
%TCIMACRO{\U{2102} }%
%BeginExpansion
\mathbb{C}
%EndExpansion
=$ complex plane. We may assume, without loss of generality, that $%
f^{(r+1)}(x)>0$ on $[a,b]$. Suppose first that $r$ is odd. Using the formula 
$\dfrac{\partial }{\partial x}\dint\limits_{a}^{x}K(x,t)dt=\dint%
\limits_{a}^{x}\dfrac{\partial K}{\partial x}(x,t)dt+K(x,x)$ with $K(x,t)=$ $%
f^{(r+1)}(t)(x-t)^{r-1}$, it follows that \newline
$E_{a}^{\prime }(x)=\dfrac{1}{(r-1)!}\dint%
\limits_{a}^{x}f^{(r+1)}(t)(x-t)^{r-1}dt>0$ for $a<x<b$, which implies that $%
E_{a}(x)$ is strictly increasing on $(a,b)$.

$E_{b}^{\prime }(x)=-\dfrac{1}{(r-1)!}\dint%
\limits_{x}^{b}f^{(r+1)}(t)(x-t)^{r-1}dt<0$ for $a<x<b$, which implies that $%
E_{b}(x)$ is strictly decreasing on $(a,b)$. Since $E_{a}(a)=E_{b}(b)=0$,
there is a unique $x_{0},a<x_{0}<b$, such that $E_{b}(x_{0})-E_{a}(x_{0})=0$%
, which implies that $(P_{b}-P_{a})(x_{0})=0$. Now $(P_{b}-P_{a})^{\prime
}(x)=\dfrac{1}{(r-1)!}\dint\limits_{a}^{b}f^{(r+1)}(t)(x-t)^{r-1}dt>0$ for $%
x\in \Re =$ real line. Hence $P_{b}-P_{a}\;$has precisely one real zero.
That proves (i).

Now suppose that $r$ is even. Then $(P_{b}-P_{a})(x)>0$ for $x\in \Re $,
which implies that $P_{b}-P_{a}$ has no real zeros. That proves (ii).
\end{proof}

(\ref{Pab}) gives the following important equivalence: If $P_{c}$ is the
Taylor polynomial to $f$ of order $r$ at $x=c$, then

\begin{equation}
P_{b}(z)=P_{a}(z)\iff \dint\limits_{a}^{b}f^{(r+1)}(t)(z-t)^{r}dt=0,z\in 
%TCIMACRO{\U{2102} }%
%BeginExpansion
\mathbb{C}
%EndExpansion
\text{.}  \label{error}
\end{equation}

\qquad We now prove a general result which relates the \textbf{averages} of
the real parts of the roots of $P_{b}-P_{a}$ to the center of mass of $[a,b]$
with density function $\left\vert f\,^{(r+1)}(t)\right\vert $.

\begin{proposition}
\label{P2}Suppose that $f^{(r+1)}$ is continuous in some open interval
containing $[a,b]$ and has no zeros in $[a,b]$. Let $P_{c}$ denote the
Taylor polynomial to $f$ of order $r$ at $x=c$.

(i) Suppose that $r$ is odd and let $s=\dfrac{r-1}{2}$. By Proposition \ref%
{P1}(ii), $P_{b}-P_{a}$ has precisely one real zero $x_{0}$, and $r-1$
nonreal zeros, $z_{1},\bar{z}_{1},...,z_{s},\bar{z}_{s}$. Let $x_{k}=\func{Re%
}z_{k},k=1,...,s$. Then 
\begin{equation*}
\dfrac{x_{0}+\dsum\limits_{k=1}^{s}2\func{Re}z_{k}}{r}=\dfrac{%
\dint\limits_{a}^{b}tf\,^{(r+1)}(t)dt}{\dint\limits_{a}^{b}f\,^{(r+1)}(t)dt}.
\end{equation*}

(ii) Suppose that $r$ is even and let $s=\dfrac{r}{2}$. By Proposition \ref%
{P1}(ii), $P_{b}-P_{a}$ has all nonreal zeros, $z_{1},\bar{z}_{1},...,z_{s},%
\bar{z}_{s}$. Let $x_{k}=\func{Re}z_{k},k=1,...,s$. Then 
\begin{equation*}
\dfrac{\dsum\limits_{k=1}^{s}2\func{Re}z_{k}}{r}=\dfrac{\dint%
\limits_{a}^{b}tf\,^{(r+1)}(t)dt}{\dint\limits_{a}^{b}f\,^{(r+1)}(t)dt}\text{%
.}
\end{equation*}
\end{proposition}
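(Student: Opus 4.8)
The plan is to use the representation (\ref{Pab}) to read off the elementary symmetric functions of the roots of $P_b-P_a$, and in particular the sum of the roots, via Vieta's formulas. First I would expand $(z-t)^r$ by the binomial theorem inside the integral in (\ref{Pab}), so that
\[
(P_b-P_a)(z)=\frac{1}{r!}\sum_{j=0}^{r}\binom{r}{j}(-1)^{r-j}\left(\int_a^b t^{r-j}f^{(r+1)}(t)\,dt\right)z^{j}.
\]
This exhibits $P_b-P_a$ as a polynomial of degree exactly $r$: the leading coefficient (the $z^r$ term) is $\frac{1}{r!}\int_a^b f^{(r+1)}(t)\,dt$, which is nonzero since $f^{(r+1)}$ has constant sign on $[a,b]$; and the $z^{r-1}$ coefficient is $\frac{1}{r!}\cdot r\cdot(-1)\int_a^b t f^{(r+1)}(t)\,dt=-\frac{1}{(r-1)!}\int_a^b t f^{(r+1)}(t)\,dt$.

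Next I would invoke Vieta: for a degree-$r$ polynomial $c_r z^r+c_{r-1}z^{r-1}+\cdots$, the sum of all $r$ roots (counted with multiplicity) equals $-c_{r-1}/c_r$. By Proposition \ref{P1} the roots are $x_0,z_1,\bar z_1,\dots,z_s,\bar z_s$ in the odd case (with $r=2s+1$) and $z_1,\bar z_1,\dots,z_s,\bar z_s$ in the even case (with $r=2s$), and since $z_k+\bar z_k=2\operatorname{Re}z_k$, the sum of the roots is $x_0+\sum_{k=1}^s 2\operatorname{Re}z_k$ in case (i) and $\sum_{k=1}^s 2\operatorname{Re}z_k$ in case (ii). Setting this equal to $-c_{r-1}/c_r$ and plugging in the two coefficients computed above gives
\[
x_0+\sum_{k=1}^{s}2\operatorname{Re}z_k=\frac{\int_a^b t f^{(r+1)}(t)\,dt}{\int_a^b f^{(r+1)}(t)\,dt}
\]
in the odd case, and the same with $x_0$ omitted in the even case; dividing both sides by $r$ yields the claimed identities.

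There is no serious obstacle here — the argument is essentially bookkeeping with the binomial expansion and Vieta's formulas — so the only thing to be careful about is confirming the polynomial genuinely has degree $r$ (so that "sum of roots" has exactly $r$ summands and the division by $r$ is the correct normalization). That is guaranteed by $\int_a^b f^{(r+1)}(t)\,dt\neq 0$, which follows from the hypothesis that $f^{(r+1)}$ is continuous and never vanishes on $[a,b]$, hence has constant sign there. I would also note that in case (i) the count $x_0$ plus $2s$ nonreal roots equals $1+2s=r$, and in case (ii) the $2s$ nonreal roots already account for all $r=2s$ roots, so Proposition \ref{P1} exactly enumerates the full root multiset in each case and no root is being double-counted or omitted.
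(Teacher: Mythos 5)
Your proof is correct and reaches the same punchline as the paper --- Vieta's formula applied to the ratio of the $z^{r-1}$ and $z^r$ coefficients of $P_b-P_a$ --- but you compute those coefficients by a different route. The paper starts from the explicit expression $(P_b-P_a)(z)=\sum_{k=0}^{r}\frac{f^{(k)}(b)(z-b)^k-f^{(k)}(a)(z-a)^k}{k!}$, reads off the $z^{r-1}$ coefficient in terms of $f^{(r-1)}$ and $f^{(r)}$ at the endpoints, and then needs an integration-by-parts identity to convert that boundary expression into the ratio $\int_a^b t f^{(r+1)}(t)\,dt\big/\int_a^b f^{(r+1)}(t)\,dt$. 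You instead expand $(z-t)^r$ binomially inside the integral representation (\ref{Pab}) already established in Proposition \ref{P1}, which hands you both coefficients directly as moments of $f^{(r+1)}$ and makes the integration by parts unnecessary; this is arguably cleaner, and your explicit check that the leading coefficient is nonzero (so the root multiset really has $r$ elements) is the same normalization the paper achieves by dividing by $f^{(r)}(b)-f^{(r)}(a)$ to make $Q$ monic. One transcription slip to fix: with $c_r=\frac{1}{r!}\int_a^b f^{(r+1)}(t)\,dt$ and $c_{r-1}=-\frac{1}{(r-1)!}\int_a^b t f^{(r+1)}(t)\,dt$ as you correctly computed, the sum of the roots is $-c_{r-1}/c_r=r\,\int_a^b t f^{(r+1)}(t)\,dt\big/\int_a^b f^{(r+1)}(t)\,dt$, so your displayed equation is missing the factor $r$ on the right-hand side; since your final step is ``divide both sides by $r$,'' the intended identity is clearly the correct one, but as written the display and the conclusion are inconsistent.
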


In either case, since $P_{b}-P_{a}$ has real coefficients, Proposition \ref%
{P2} states that the average of the real parts of the roots of $P_{b}-P_{a}$
is the center of mass of $[a,b]$, where the density function is $\left\vert
f\,^{(r+1)}(t)\right\vert $.

\begin{proof}
We may assume, without loss of generality, that $f^{(r+1)}(x)>0$ on $[a,b]$.
Define the monic polynomial $Q(z)=r!\dfrac{\left( P_{b}-P_{a}\right) (z)}{%
f^{(r)}(b)-f^{(r)}(a)}$. Since $\left( P_{b}-P_{a}\right)
(z)=\dsum\limits_{k=0}^{r}\dfrac{f^{(k)}(b)(z-b)^{k}-f^{(k)}(a)(z-a)^{k}}{k!}
$, the coefficient of $z^{r-1}$ in $Q(z)$ is 
\begin{gather*}
\dfrac{{\large (}rf^{(r-1)}(b)-rbf^{(r)}(b){\large )}-{\large (}%
rf^{(r-1)}(a)-raf^{(r)}(a){\large )}}{f^{(r)}(b)-f^{(r)}(a)}= \\
r\dfrac{{\large (}f^{(r-1)}(b)-bf^{(r)}(b){\large )}-{\large (}%
f^{(r-1)}(a)-af^{(r)}(a){\large )}}{f^{(r)}(b)-f^{(r)}(a)}\text{.}
\end{gather*}%
Using Integration by Parts, it is easy to show that

\begin{equation*}
\dfrac{\dint\limits_{a}^{b}tf\,^{(r+1)}(t)dt}{\dint\limits_{a}^{b}f%
\,^{(r+1)}(t)dt}=-\dfrac{{\large (}f^{(r-1)}(b)-bf^{(r)}(b){\large )}-%
{\large (}f^{(r-1)}(a)-af^{(r)}(a){\large )}}{f^{(r)}(b)-f^{(r)}(a)}\text{.}
\end{equation*}%
Thus the coefficient of $z^{r-1}$ in $Q(z)$ is $-r\dfrac{\dint%
\limits_{a}^{b}tf\,^{(r+1)}(t)dt}{\dint\limits_{a}^{b}f\,^{(r+1)}(t)dt}$.

Now suppose that $r$ is odd. Since\ $P_{b}-P_{a}$ and $Q$ have the same
roots, $Q(z)=(z-x_{0})(z-z_{1})(z-\bar{z}_{1})\cdots (z-z_{s})(z-\bar{z}%
_{s}) $. Since the coefficient of $z^{r-1}$ in $Q(z)$ is also given by

\begin{equation*}
-x_{0}-\left( z_{1}+\bar{z}_{1}+\cdots +z_{s}+\bar{z}_{s}\right)
=-x_{0}-2\dsum\limits_{k=1}^{s}x_{k}\text{,}
\end{equation*}%
we have $-x_{0}-2\dsum\limits_{k=1}^{s}x_{k}=-r\dfrac{\dint\limits_{a}^{b}tf%
\,^{(r+1)}(t)dt}{\dint\limits_{a}^{b}f\,^{(r+1)}(t)dt}$, which implies that

$\dfrac{x_{0}+\dsum\limits_{k=1}^{s}2\func{Re}z_{k}}{r}=\dfrac{%
\dint\limits_{a}^{b}tf\,^{(r+1)}(t)dt}{\dint\limits_{a}^{b}f\,^{(r+1)}(t)dt}$%
.

Now suppose that $r$ is even and write $Q(z)=(z-z_{1})(z-\bar{z}_{1})\cdots
(z-z_{s})(z-\bar{z}_{s})$. Since the coefficient of $z^{r-1}$ in $Q(z)$ is
also given by%
\begin{equation*}
-\left( z_{1}+\bar{z}_{1}+\cdots +z_{s}+\bar{z}_{s}\right)
=-2\dsum\limits_{k=1}^{s}x_{k}\text{,}
\end{equation*}%
we have

$-2\dsum\limits_{k=1}^{s}x_{k}=-r\dfrac{\dint\limits_{a}^{b}tf\,^{(r+1)}(t)dt%
}{\dint\limits_{a}^{b}f\,^{(r+1)}(t)dt}$, which implies that

$\dfrac{\dsum\limits_{k=1}^{s}2\func{Re}z_{k}}{r}=\dfrac{\dint%
\limits_{a}^{b}tf\,^{(r+1)}(t)dt}{\dint\limits_{a}^{b}f\,^{(r+1)}(t)dt}$.
\end{proof}

\textbf{Remark: }Suppose that $f^{(r+1)}$ is continuous and has no zeros in $%
[a,b]$. If\ the average\ of the real parts of the nonreal roots of $%
P_{b}-P_{a}$ did not lie in $(a,b)$, then it would not be possible for the
real parts of all of the nonreal roots of $P_{b}-P_{a}$ to lie in $(a,b)$.
For each $r$, there are examples, such as $f(z)=z^{r+1}$(see Proposition \ref%
{P4} below), where the real parts of all pairs of complex conjugate roots of 
$P_{b}-P_{a}$ do lie in $(a,b)$ for all $0<a<b$. Of course if $r=3$, then
there is only one pair of nonreal complex conjugate roots. As noted above,
it is possible that the real parts of that complex conjugate pair do not lie
in $(a,b)$. For example, $f(z)=z^{3/2}$, $a=1$, and $b=36$. We also have
examples for $r=4$ and for $r=5$ where only one pair of nonreal complex
conjugate roots has real part lying in $(a,b)$.

If $r=2$, then $P_{b}-P_{a}$ has no real roots and only one pair of complex
conjugate nonreal roots. Applying Proposition \ref{P2}(ii) then yields the
following corollary.

\begin{corollary}
\label{C1}Suppose that $f\,^{\prime \prime \prime }$ is continuous in some
open interval containing $[a,b]$ and has no zeros in $[a,b]$. Let $P_{c}$
denote the Taylor polynomial to $f$ of order $2$ at $x=c$. Let $x_{1}\pm
iy_{1}$ denote the nonreal roots of $P_{b}-P_{a}$ guaranteed by Proposition %
\ref{P1}(ii). Then 
\begin{equation*}
x_{1}=\dfrac{\dint\limits_{a}^{b}tf\,^{\prime \prime \prime }(t)dt}{%
\dint\limits_{a}^{b}f\,^{\prime \prime \prime }(t)dt}
\end{equation*}%
and thus $a<x_{1}<b$.
\end{corollary}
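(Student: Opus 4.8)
The plan is to specialize Proposition \ref{P2}(ii) to the case $r=2$ and then extract the strict inequalities from an elementary estimate on the ratio of integrals.

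First I would check that $P_{b}-P_{a}$ genuinely has degree $2$, so that it really does possess a conjugate pair of nonreal roots rather than degenerating to a linear or constant polynomial. From (\ref{Pab}) with $r=2$, the coefficient of $z^{2}$ in $(P_{b}-P_{a})(z)$ is $\tfrac12\int_{a}^{b}f\,^{\prime\prime\prime}(t)\,dt$; since $f\,^{\prime\prime\prime}$ is continuous and has no zeros on $[a,b]$ it has constant sign there, so this integral is nonzero and $P_{b}-P_{a}$ is a bona fide quadratic. By Proposition \ref{P1}(ii) both of its roots are nonreal, and since the polynomial has real coefficients they form a complex conjugate pair $x_{1}\pm iy_{1}$ with $y_{1}\neq 0$. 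This is exactly the $s=r/2=1$ case of Proposition \ref{P2}.

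Next I would apply Proposition \ref{P2}(ii) directly. With $r=2$ and $s=1$ its conclusion reads $\dfrac{2\operatorname{Re}z_{1}}{2}=\dfrac{\int_{a}^{b}tf\,^{\prime\prime\prime}(t)\,dt}{\int_{a}^{b}f\,^{\prime\prime\prime}(t)\,dt}$, i.e. $x_{1}=\operatorname{Re}z_{1}=\dfrac{\int_{a}^{b}tf\,^{\prime\prime\prime}(t)\,dt}{\int_{a}^{b}f\,^{\prime\prime\prime}(t)\,dt}$, which is precisely the claimed formula for the real part of the nonreal root.

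Finally, for the bounds $a<x_{1}<b$, I would argue as in the proofs above that we may assume without loss of generality that $f\,^{\prime\prime\prime}(t)>0$ on $[a,b]$ (replacing $f$ by $-f$ negates $P_{b}-P_{a}$ but leaves its zeros unchanged). Then for every $t$ in the open interval $(a,b)$ we have $af\,^{\prime\prime\prime}(t)<tf\,^{\prime\prime\prime}(t)<bf\,^{\prime\prime\prime}(t)$; integrating over $[a,b]$ and noting that the strict inequalities survive because $f\,^{\prime\prime\prime}$ is continuous and positive, we obtain $a\int_{a}^{b}f\,^{\prime\prime\prime}(t)\,dt<\int_{a}^{b}tf\,^{\prime\prime\prime}(t)\,dt<b\int_{a}^{b}f\,^{\prime\prime\prime}(t)\,dt$, and dividing through by the positive number $\int_{a}^{b}f\,^{\prime\prime\prime}(t)\,dt$ yields $a<x_{1}<b$. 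There is essentially no obstacle here: the only points requiring a moment's care are confirming that the quadratic is nondegenerate and that the integral inequalities remain strict, both of which follow immediately from the non-vanishing (hence constant sign) of $f\,^{\prime\prime\prime}$ on $[a,b]$.
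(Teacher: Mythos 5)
Your proposal is correct and follows the same route as the paper, which obtains the corollary by applying Proposition \ref{P2}(ii) with $r=2$, $s=1$. The extra details you supply (nondegeneracy of the quadratic and the strict integral inequalities giving $a<x_{1}<b$) are left implicit in the paper but are exactly the intended justification.
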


The case $r=1$ of Proposition \ref{P2} above was given in \cite{H1}, where $%
x_{1}=\dfrac{\dint\limits_{a}^{b}tf\,^{\prime \prime }(t)dt}{%
\dint\limits_{a}^{b}f\,^{\prime \prime }(t)dt}$ is just the $x$ coordinate
of the intersection point of tangent lines to a convex or concave function.
This was actually the starting point that led to the generalization to
intersection points of Taylor polynomials. Replacing $f\,^{\prime \prime
\prime }$ by $f\,^{\prime \prime }$ shows that this yields the same family
of means as for the $r=2$ case.

\begin{proposition}
\label{P4}Let $f(z)=z^{r+1}$ and let $P_{c}$ denote the Taylor polynomial to 
$f$ of order $r$ at $x=c$. Then all of the nonreal roots of $P_{b}-P_{a}$
have real part given by the arithmetic mean $A(a,b)=\dfrac{a+b}{2}$.
\end{proposition}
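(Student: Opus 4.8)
The plan is to invoke the equivalence (\ref{error}). Since $f(z)=z^{r+1}$ has $f^{(r+1)}(t)\equiv (r+1)!$, a nonzero constant, (\ref{error}) says that $z$ is a root of $P_{b}-P_{a}$ if and only if $\dint\limits_{a}^{b}(z-t)^{r}\,dt=0$. Evaluating this elementary integral gives $\dfrac{(z-a)^{r+1}-(z-b)^{r+1}}{r+1}=0$, so the roots of $P_{b}-P_{a}$ are precisely the solutions of $(z-a)^{r+1}=(z-b)^{r+1}$. (In fact one checks $P_{b}-P_{a}=(z-a)^{r+1}-(z-b)^{r+1}$ directly from (\ref{Pab}); this polynomial has degree $r$, so there are exactly $r$ roots counted with multiplicity.)

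Next I would solve $(z-a)^{r+1}=(z-b)^{r+1}$. First dispose of $z=b$: substituting gives $(b-a)^{r+1}=0$, which is false, so no root equals $b$. Hence we may divide and write $\left(\dfrac{z-a}{z-b}\right)^{r+1}=1$, i.e. $\dfrac{z-a}{z-b}=\omega$ for some $(r+1)$-th root of unity $\omega$. The value $\omega=1$ would force $a=b$, so it is excluded; thus $\omega$ ranges over the $r$ roots of unity other than $1$. Solving $z-a=\omega(z-b)$ for $z$ (legitimate since $\omega\neq 1$) yields the closed form $z=\dfrac{a-\omega b}{1-\omega}$.

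The last step is to compute $\func{Re}z$ for each such $\omega$. Multiplying numerator and denominator by $1-\bar{\omega}$ and using $\omega\bar{\omega}=|\omega|^{2}=1$ gives $z=\dfrac{(a+b)-(a\bar{\omega}+b\omega)}{|1-\omega|^{2}}$. Since $a+b$ and $|1-\omega|^{2}$ are real, $\func{Re}z=\dfrac{(a+b)-\func{Re}(a\bar{\omega}+b\omega)}{|1-\omega|^{2}}$, and $\func{Re}(a\bar{\omega}+b\omega)=(a+b)\func{Re}\omega$ because $\func{Re}\bar{\omega}=\func{Re}\omega$. Using $|1-\omega|^{2}=(1-\omega)(1-\bar{\omega})=2-2\func{Re}\omega$ together with $\func{Re}\omega<1$ (as $\omega\neq 1$ and $|\omega|=1$), everything collapses: $\func{Re}z=\dfrac{(a+b)(1-\func{Re}\omega)}{2(1-\func{Re}\omega)}=\dfrac{a+b}{2}$.

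There is no serious obstacle here; the argument is a short explicit computation, and the main effort is just bookkeeping. One should confirm that $z=b$ and $\omega=1$ contribute no roots (so the two division steps are valid), and that the map $\omega\mapsto\dfrac{a-\omega b}{1-\omega}$ is injective, so that the $r$ admissible roots of unity do account for all $r$ roots of $P_{b}-P_{a}$. It is also worth remarking that when $r$ is odd the choice $\omega=-1$ gives $z=\dfrac{a+b}{2}$, which is exactly the real root $x_{0}$ furnished by Proposition \ref{P1}(i); thus in every case \emph{all} roots of $P_{b}-P_{a}$, real or nonreal, have real part $A(a,b)$, which in particular proves the claimed statement about the nonreal ones.
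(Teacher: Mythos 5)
Your proposal is correct and follows essentially the same route as the paper: reduce via (\ref{error}) to $(z-a)^{r+1}=(z-b)^{r+1}$, parametrize the solutions by the $(r+1)$-th roots of unity $\omega\neq 1$, and extract $\func{Re}z=\dfrac{a+b}{2}$ by the conjugate trick. The only differences are cosmetic (you write $\dfrac{z-a}{z-b}=\omega$ where the paper writes $z-b=v(z-a)$) plus some extra bookkeeping (excluding $z=b$, counting roots) that the paper omits.
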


\begin{proof}
By (\ref{error}), it suffices to show that $\dint%
\limits_{a}^{b}f^{(r+1)}(t)(z-t)^{r}dt=0\iff \func{Re}z=\dfrac{a+b}{2}$. Now 
$\dint\limits_{a}^{b}f^{(r+1)}(t)(z-t)^{r}dt=0\iff
\dint\limits_{a}^{b}(z-t)^{r}dt=0\iff (z-b)^{r+1}-(z-a)^{r+1}=0\iff
z-b=v(z-a)$, where $v=e^{2\pi ki/(r+1)}$ is an $(r+1)$st root of unity. Note
that $v\neq 1$ since $a\neq b$. Thus $z=\dfrac{b-va}{1-v}=\dfrac{-v}{1-v}a+%
\dfrac{1}{1-v}b$. 
\begin{gather*}
\dfrac{1}{1-v}=\dfrac{1-\bar{v}}{(1-v)(1-\bar{v})} \\
=\dfrac{1-\bar{v}}{1-2\func{Re}v+\left\vert v\right\vert ^{2}}=\dfrac{1}{2(1-%
\func{Re}v)}(1-\bar{v})\text{,}
\end{gather*}%
therefore $\func{Re}\left( \dfrac{1}{1-v}\right) =\dfrac{1}{2(1-\func{Re}v)}%
(1-\func{Re}v)=\dfrac{1}{2}$.

$1=\dfrac{-v}{1-v}+\dfrac{1}{1-v}$ implies that $\func{Re}\left( \dfrac{-v}{%
1-v}\right) =1-\func{Re}\left( \dfrac{1}{1-v}\right) =\dfrac{1}{2}$. Hence $%
\func{Re}z=\dfrac{1}{2}a+\dfrac{1}{2}b$.
\end{proof}

\section{$r=3$\label{S2}}

\qquad We now state our main result for $r=3$.

\begin{theorem}
\label{T1}Suppose that $f(z)=z^{p}$, where $p$ is an integer, $p\notin
\left\{ 0,1,2,3\right\} $. Let $P_{c}$ denote the Taylor polynomial to $f$
of order $3$ at $x=c$. Then for any $0<a<b$, $P_{b}-P_{a}$ has nonreal roots 
$x_{1}\pm iy_{1}$, with $a<x_{1}<b$.
\end{theorem}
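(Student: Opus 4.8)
The plan is to reduce Theorem \ref{T1} to locating the unique real root of one explicit cubic. For $r=3$ and $f(z)=z^{p}$ we have $f^{(4)}(t)=K\,t^{p-4}$ with $K=p(p-1)(p-2)(p-3)$; since $p$ is an integer outside $\{0,1,2,3\}$, $K\neq 0$, and in fact $K>0$ (for $p\le -1$ all four factors are negative, for $p\ge 4$ all four are positive), so $f^{(4)}>0$ on $[a,b]$ and the hypotheses of Propositions \ref{P1} and \ref{P2} are satisfied. By (\ref{error}) the zeros of $P_{b}-P_{a}$ are exactly those of
\begin{equation*}
C(z)=\int_{a}^{b}t^{p-4}(z-t)^{3}\,dt=I_{0}z^{3}-3I_{1}z^{2}+3I_{2}z-I_{3},\qquad
I_{k}=\int_{a}^{b}t^{p-4+k}\,dt=\frac{b^{p-3+k}-a^{p-3+k}}{p-3+k},
\end{equation*}
for $k=0,1,2,3$; the exclusions on $p$ are precisely what make each denominator $p-3+k$ nonzero, and $I_{0}>0$ so $C$ is a genuine cubic. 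By Proposition \ref{P1}(i), $C$ has a unique real zero $x_{0}$, with $a<x_{0}<b$; by Proposition \ref{P2}(i) its other two zeros are a genuine nonreal conjugate pair $x_{1}\pm iy_{1}$ ($y_{1}>0$), and $x_{0}+2x_{1}=3m$, where $m=I_{1}/I_{0}$ is the center of mass of $[a,b]$ with density $t^{p-4}$, so $a<m<b$.

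Next I would introduce the cubic
\begin{equation*}
g(x)=-I_{0}\,C(3m-2x)=8I_{0}^{2}x^{3}-24I_{0}I_{1}x^{2}+(18I_{1}^{2}+6I_{0}I_{2})x-(9I_{1}I_{2}-I_{0}I_{3}),
\end{equation*}
which has positive leading coefficient $8I_{0}^{2}$ (one may also obtain $g$ by eliminating $y_{1}$ from the three Vieta relations for $C$). A number $x$ is a zero of $g$ exactly when $3m-2x$ is a zero of $C$; since $C$ has the single real zero $x_{0}$ and $(3m-x_{0})/2=x_{1}$, the cubic $g$ has $x_{1}$ as its only real zero, and it is simple because the remaining two zeros of $g$, namely $(3m-x_{1})/2\mp(y_{1}/2)i$, are genuinely nonreal. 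Hence $g(x)<0$ for $x<x_{1}$ and $g(x)>0$ for $x>x_{1}$, so Theorem \ref{T1} is equivalent to the two inequalities $g(a)<0$ and $g(b)>0$: the Intermediate Value Theorem then yields a real zero of $g$ in $(a,b)$, which can only be $x_{1}$.

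It remains to establish $g(a)<0$ and $g(b)>0$, which after unwinding read
\begin{equation*}
\int_{a}^{b}t^{p-4}(3m-2a-t)^{3}\,dt>0\qquad\text{and}\qquad\int_{a}^{b}t^{p-4}(3m-2b-t)^{3}\,dt<0 .
\end{equation*}
Since $3m-2a-t$ is positive at $t=a$ but may change sign on $[a,b]$, these are not immediate. The route I would take is to substitute the closed forms of $I_{0},\dots,I_{3}$, use the homogeneity of $z^{p}$ to normalize $a=1$, $b=t>1$, clear the denominators $p-3+k$ along with the powers of $ab$ coming from negative exponents, factor out the power of $(b-a)$ that must vanish at $a=b$, and then determine the sign of the surviving polynomial in $t$ for each integer $p\notin\{0,1,2,3\}$. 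This last step is the heart of the argument and the one I expect to be the main obstacle; it is also the only place where integrality of $p$ is genuinely used — the conclusion of Theorem \ref{T1} already fails for $p=\frac{3}{2}$, $a=1$, $b=36$ — so the surviving polynomial cannot merely have all coefficients of one sign, and its terms will instead have to be organized into blocks of controlled sign, most likely with the cases $p\ge 4$ and $p\le -1$ handled separately.
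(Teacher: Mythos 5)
Your setup is correct and matches the paper's strategy: the paper also reduces to $a=1$, forms the same cubic $g$ (your $g$ is the paper's $g$ of (\ref{g}) divided by the positive constant $K^{2}=\bigl(p(p-1)(p-2)(p-3)\bigr)^{2}$), shows its unique real zero is $x_{1}$, and then argues $g(1)<0$ and $g(b)>0$ via the Intermediate Value Theorem. Your derivation of $g$ as $-I_{0}\,C(3m-2x)$ is in fact cleaner than the paper's: the paper obtains $g$ by eliminating $y_{1}^{2}$ from the real/imaginary-part system (Lemma \ref{L2} and Proposition \ref{P3}) and needs a separate Cauchy--Bunyakowsky argument ($B^{2}-AC<0$, Lemma \ref{L1}) to see that $g$ has a unique real zero, whereas your affine substitution $z=3m-2x$ makes both facts immediate from Propositions \ref{P1}(i) and \ref{P2}(i). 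That part of your proposal is a genuine, and arguably preferable, alternative.

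The gap is that you stop exactly where the real work begins. The inequalities $g(1)<0$ and $g(b)>0$ for $b>1$ are the entire content of the paper's Propositions \ref{P5} and \ref{P6}, which occupy most of Section \ref{S2}: writing $V(b)=g(1)/\bigl(p(p-1)\bigr)$ and $W(b)=g(b)/\bigl(p(p-1)\bigr)$, the paper proves that for positive integers $p=n\ge 4$ one has $V(b)=Q(b)(b-1)^{5}$ with \emph{every} coefficient of $Q$ negative (established by computing $V^{(i)}(0)$, expanding $Q^{(r)}(0)$ via a Leibniz formula, and running a five-case analysis in $r$ that includes discriminant estimates for auxiliary cubics), handles negative $p$ by the substitution $b\mapsto 1/b$ in Proposition \ref{P6}, and disposes of $W$ entirely through the identity $W(b)=b^{2p-3}V(1/b)$. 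None of this is sketched in your proposal beyond ``determine the sign of the surviving polynomial,'' so the proof is incomplete at its central step. Two smaller corrections: your expectation that the surviving polynomial ``cannot merely have all coefficients of one sign'' is contrary to what actually happens --- after extracting $(b-1)^{5}$ the quotient \emph{does} have all coefficients of one sign, and the counterexample $p=\tfrac32$ is no obstruction because for non-integer $p$ the function $V$ is not a polynomial in $b$ at all; and the exponent of the vanishing factor is $5$, not something to be discovered case by case, since $V$ and its first four derivatives vanish at $b=1$ for every $p$.
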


\textbf{Remark: }Theorem \ref{T1} defines a countable family of means $%
M_{z^{p}}^{3}(a,b)=x_{1}$, where $p\in 
%TCIMACRO{\U{2124} }%
%BeginExpansion
\mathbb{Z}
%EndExpansion
-\left\{ 0,1,2,3\right\} $. By Proposition \ref{P4} with $r=3$, amongst that
family of means is the arithmetic mean.

\textbf{Remark: }Finding $z_{1}$ such that $P_{b}(z_{1})=P_{a}(z_{1})$ of
course involves solving a cubic polynomial equation. There are well--known
formulas for the solutions of such equations, but the resulting expressions
are complicated and it seems difficult to determine from such a formula that 
$a<x_{1}<b$. For example, if $p=5$ and $a=1$, then one has 
\begin{eqnarray*}
x_{1} &=&\dfrac{1}{3}\dfrac{b-1}{b+1}\times \\
&&{\large (}\tfrac{1}{20}\sqrt[3]{100\left( b^{2}+7b+1\right) \left(
b-1\right) +150\sqrt{6}(b+1)\sqrt{q(b)}} \\
&&-\dfrac{5}{2}\dfrac{b^{2}+4b+1}{\sqrt[3]{100\left( b^{2}+7b+1\right)
\left( b-1\right) +150\sqrt{6}(b+1)\sqrt{q(b)}}} \\
&&+2\left( b^{2}+b+1\right) \text{,}
\end{eqnarray*}%
where $q(b)=b^{4}+10b^{3}+28b^{2}+10b+1$. Furthermore, we want to determine
that for certain classes of functions, $f$, $a<\func{Re}(z_{1})<b$. Our
proof of Theorem \ref{T1} also involves solving a certain cubic polynomial
equation, $g(x)=0$(see (\ref{g}) below). However, this time we are looking
for a \textbf{real} solution, $x_{1}$, of $g(x)=0$, with $a<x_{1}<b$. That
allows us to use the Intermediate Value Theorem to show that there is such a
solution. That is, we show that for certain classes of functions, $f$, $%
g(a)g(b)<0$. That avoids actually working with a formula for the solution of
a cubic polynomial equation.

If $P_{c}$ is the Taylor polynomial to $f$ of order $3$ at $x=c$, then (\ref%
{error}) becomes 
\begin{equation}
P_{b}(z)=P_{a}(z)\iff \dint\limits_{a}^{b}f\,^{\prime \prime \prime \prime
}(t)(z-t)^{3}dt=0\text{.}  \label{1}
\end{equation}

For the rest of this section we prove some lemmas and propositions which are
used to prove Theorem \ref{T1}. Important for our proofs are the following
integrals. Let 
\begin{equation}
A=\dint\limits_{a}^{b}f\,^{\prime \prime \prime \prime
}(t)dt,B=\dint\limits_{a}^{b}tf\,^{\prime \prime \prime \prime
}(t)dt,C=\dint\limits_{a}^{b}t^{2}f\,^{\prime \prime \prime \prime
}(t)dt,D=\dint\limits_{a}^{b}t^{3}f\,^{\prime \prime \prime \prime }(t)dt%
\text{.}  \label{ABCD}
\end{equation}%
We suppress the dependence of $A,B,C$, and $D$ on $a,b$, and on $f$ in our
notation. We now prove a lemma which gives an equivalent condition for $%
P_{b}(z_{1})=P_{a}(z_{1})$ to hold when $r=3$.

\begin{lemma}
\label{L2}Let $P_{c}$ denote the Taylor polynomial to $f$ of order $3$ at $%
x=c$, and let $z_{1}=x_{1}+iy_{1}$ with $y_{1}\neq 0$. Then $%
P_{b}(z_{1})=P_{a}(z_{1})$ if and only if the following system of equations
holds.

\begin{eqnarray}
Ax_{1}^{3}-3Bx_{1}^{2}+3Cx_{1}-D+3(B-Ax_{1})y_{1}^{2} &=&0  \label{sys2} \\
3Ax_{1}^{2}-6Bx_{1}+3C-Ay_{1}^{2} &=&0\text{.}  \notag
\end{eqnarray}
\end{lemma}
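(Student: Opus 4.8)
The plan is to start from the equivalence \eqref{1}, namely that $P_b(z_1)=P_a(z_1)$ holds if and only if $\dint_a^b f''''(t)(z_1-t)^3\,dt=0$, and simply expand the cubic $(z_1-t)^3$ in powers of $t$. Writing $(z_1-t)^3 = z_1^3 - 3z_1^2 t + 3 z_1 t^2 - t^3$ and integrating term by term against $f''''(t)$, the integral becomes $A z_1^3 - 3B z_1^2 + 3C z_1 - D$, where $A,B,C,D$ are the moments defined in \eqref{ABCD}. So the condition $P_b(z_1)=P_a(z_1)$ is equivalent to the single complex equation
\begin{equation*}
A z_1^3 - 3B z_1^2 + 3C z_1 - D = 0 .
\end{equation*}

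Next I would substitute $z_1 = x_1 + i y_1$ and separate real and imaginary parts. Using $z_1^2 = (x_1^2 - y_1^2) + 2i x_1 y_1$ and $z_1^3 = (x_1^3 - 3x_1 y_1^2) + i(3x_1^2 y_1 - y_1^3)$, the real part of $A z_1^3 - 3B z_1^2 + 3C z_1 - D$ is
\begin{equation*}
A(x_1^3 - 3x_1 y_1^2) - 3B(x_1^2 - y_1^2) + 3C x_1 - D = A x_1^3 - 3B x_1^2 + 3C x_1 - D + 3(B - A x_1) y_1^2 ,
\end{equation*}
which is exactly the first equation of \eqref{sys2}. The imaginary part is
\begin{equation*}
A(3x_1^2 y_1 - y_1^3) - 3B(2 x_1 y_1) + 3C y_1 = y_1\bigl(3A x_1^2 - 6B x_1 + 3C - A y_1^2\bigr),
\end{equation*}
and since a complex number vanishes iff both its real and imaginary parts vanish, the complex equation is equivalent to the conjunction of the first equation of \eqref{sys2} and $y_1(3A x_1^2 - 6B x_1 + 3C - A y_1^2) = 0$. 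Here is the one place where a hypothesis is used: we are given $y_1 \neq 0$, so we may divide by $y_1$, and the imaginary-part equation reduces precisely to the second equation of \eqref{sys2}.

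There is really no serious obstacle here; the lemma is essentially a bookkeeping computation. The only points requiring a word of care are (a) that \eqref{1}, proved for $z$ in $[a,b]$, extends to all complex $z$ because both sides are polynomials — but this is already recorded in the paragraph following \eqref{error} and in the derivation of \eqref{Pab}, so it may be cited; and (b) that the reduction of the imaginary part is an equivalence and not merely an implication, which is exactly why the hypothesis $y_1 \neq 0$ appears in the statement. Assembling these observations gives the stated equivalence in both directions.
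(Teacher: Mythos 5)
Your proposal is correct and follows essentially the same route as the paper: both reduce to the integral condition \eqref{1}, separate real and imaginary parts, and use $y_{1}\neq 0$ to divide out $y_{1}$ from the imaginary part. The only difference is cosmetic — you expand $(z_{1}-t)^{3}$ in powers of $t$ first to get the single complex equation $Az_{1}^{3}-3Bz_{1}^{2}+3Cz_{1}-D=0$ and then split it, whereas the paper splits the integrand $(z_{1}-t)^{3}$ into real and imaginary parts first and expands afterwards.
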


\begin{proof}
Using the formulas 
\begin{eqnarray*}
\func{Re}{\large (}(z_{1}-t)^{3}{\large )} &=&{\large (}\func{Re}(z_{1})-t%
{\large )}^{3}-3{\large (}\func{Re}(z_{1})-t{\large )}\func{Im}^{2}\left(
z_{1}\right) \\
\func{Im}{\large (}(z_{1}-t)^{3}{\large )} &=&3{\large (}\func{Re}(z_{1})-t%
{\large )}^{2}\func{Im}\left( z_{1}\right) -{\large (}\func{Im}\left(
z_{1}\right) {\large )}^{3}\text{,}
\end{eqnarray*}%
we have 
\begin{equation*}
\func{Re}\left( \dint\limits_{a}^{b}f\,^{\prime \prime \prime \prime
}(t)(z_{1}-t)^{3}dt\right) =\dint\limits_{a}^{b}f\,^{\prime \prime \prime
\prime }(t)\left[ {\large (}x_{1}-t{\large )}^{3}-3{\large (}x_{1}-t{\large )%
}y_{1}^{2}\right] dt
\end{equation*}%
and 
\begin{equation*}
\func{Im}\left( \dint\limits_{a}^{b}f\,^{\prime \prime \prime \prime
}(t)(z_{1}-t)^{3}dt\right) =\dint\limits_{a}^{b}f\,^{\prime \prime \prime
\prime }(t)\left[ 3{\large (}x_{1}-t{\large )}^{2}y_{1}-y_{1}^{3}\right] dt.
\end{equation*}%
$P_{b}(z_{1})=P_{a}(z_{1})\iff \dint\limits_{a}^{b}f\,^{\prime \prime \prime
\prime }(t)(z_{1}-t)^{3}dt=0$ by (\ref{1}). If $y_{1}\neq 0$, then $%
\dint\limits_{a}^{b}f\,^{\prime \prime \prime \prime }(t)(z_{1}-t)^{3}dt=0$
is equivalent to the following two equations: 
\begin{eqnarray}
\dint\limits_{a}^{b}f\,^{\prime \prime \prime \prime }(t)\left[ {\large (}%
x_{1}-t{\large )}^{3}-3{\large (}x_{1}-t{\large )}y_{1}^{2}\right] dt &=&0
\label{sys} \\
\dint\limits_{a}^{b}f\,^{\prime \prime \prime \prime }(t)\left[ 3{\large (}%
x_{1}-t{\large )}^{2}-y_{1}^{2}\right] dt &=&0\text{.}  \notag
\end{eqnarray}%
Simplifying (\ref{sys}) shows that $(x_{1},y_{1})$ satisfies (\ref{sys}) if
and only if $(x_{1},y_{1})$ satisfies (\ref{sys2}).
\end{proof}

We now define the following very important cubic polynomial, $g$, which
depends upon the given function, $f$, as well as on $a$ and $b$:

\begin{equation}
g(x)=8A^{2}x^{3}-24ABx^{2}+6(AC+3B^{2})x+AD-9BC\text{,}  \label{g}
\end{equation}%
where $A,B,C$, and $D$ are given by (\ref{ABCD}).

\begin{lemma}
\label{L1}Let $A\neq 0,B,C\in \Re $ and not necessarily given by (\ref{ABCD}%
). If $B^{2}-AC<0$, then $g$ is increasing on $\Re $.
\end{lemma}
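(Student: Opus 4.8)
\textbf{Proof proposal for Lemma \ref{L1}.}

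The plan is to show that $g'(x)>0$ for all real $x$ by checking that the quadratic $g'$ has negative discriminant (and positive leading coefficient). Differentiating the cubic in (\ref{g}) gives
\[
g'(x)=24A^{2}x^{2}-48ABx+6(AC+3B^{2}).
\]
Since $A\neq 0$, the leading coefficient $24A^{2}$ is strictly positive, so it suffices to prove that the discriminant of $g'$ is negative; then $g'$ has no real roots and is everywhere positive, whence $g$ is strictly increasing on $\Re$.

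Next I would compute the discriminant of $g'$ explicitly. Writing $g'(x)=24A^{2}x^{2}-48ABx+6(AC+3B^{2})$, its discriminant is
\[
(48AB)^{2}-4\cdot 24A^{2}\cdot 6(AC+3B^{2})
=2304A^{2}B^{2}-576A^{2}(AC+3B^{2})
=576A^{2}\bigl(4B^{2}-(AC+3B^{2})\bigr)
=576A^{2}(B^{2}-AC).
\]
By hypothesis $B^{2}-AC<0$, and $576A^{2}>0$ since $A\neq 0$, so the discriminant is strictly negative. Therefore $g'(x)>0$ for every real $x$, and $g$ is increasing on $\Re$, as claimed.

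There is essentially no obstacle here: the argument is a one-line discriminant computation once the derivative is written down. The only thing to be careful about is the arithmetic in simplifying the discriminant to the clean form $576A^{2}(B^{2}-AC)$, and noting that the condition $A\neq 0$ is exactly what is needed both to make $g$ genuinely cubic and to keep the leading coefficient of $g'$ positive. (One could alternatively complete the square: $g'(x)=24A^{2}\bigl(x-\tfrac{B}{A}\bigr)^{2}+6(AC+3B^{2})-24B^{2}=24A^{2}\bigl(x-\tfrac{B}{A}\bigr)^{2}+6(AC-B^{2})$, which is a sum of a nonnegative term and the strictly positive quantity $6(AC-B^{2})>0$, giving $g'>0$ directly.)
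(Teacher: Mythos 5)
Your proof is correct and takes essentially the same approach as the paper: the paper completes the square to write $g'(x)=24A^{2}\bigl(\bigl(x-\tfrac{B}{A}\bigr)^{2}+\tfrac{AC-B^{2}}{4A^{2}}\bigr)>0$, which is exactly the alternative you note in your parenthetical, and your discriminant computation is just an equivalent packaging of the same fact. Both the arithmetic ($576A^{2}(B^{2}-AC)$) and the completed square ($6(AC-B^{2})$) check out.
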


\begin{proof}
$g^{\prime }(x)=\allowbreak 24A^{2}x^{2}-48ABx+6AC+18B^{2}=$

$24A^{2}\left( x^{2}-\dfrac{2B}{A}x+\dfrac{AC+3B^{2}}{4A^{2}}\right)
=24A^{2}\left( \left( x-\dfrac{B}{A}\right) ^{2}+\dfrac{AC-B^{2}}{4A^{2}}%
\right) >0$
\end{proof}

\begin{remark}
\label{R2}Let $A\neq 0,B,C\in \Re $. If $B^{2}-AC<0$ and $x_{1}\in \Re $,
then

$3x_{1}^{2}-\dfrac{6B}{A}x_{1}+\dfrac{3C}{A}=3\left( \left( x_{1}-\dfrac{B}{A%
}\right) ^{2}+\dfrac{AC-B^{2}}{A^{2}}\right) >0$.
\end{remark}

Critical for our proof of Theorem \ref{T1} below is the following
proposition.

\begin{proposition}
\label{P3}Suppose that $f\,^{\prime \prime \prime \prime }$ is continuous in
some open interval containing $[a,b]$ and has no zeros in $[a,b]$, and let $%
P_{c}$ denote the Taylor polynomial to $f$ of order $3$ at $x=c$. Then the
polynomial \ $g$ given by (\ref{g}) has a unique real zero. In addition, if $%
x_{1}\in \Re ,y_{1}=\sqrt{3x_{1}^{2}-\dfrac{6B}{A}x_{1}+\dfrac{3C}{A}}$, and 
$z_{1}=x_{1}+iy_{1}$, then $g(x_{1})=0\iff P_{b}(z_{1})=P_{a}(z_{1})$.
\end{proposition}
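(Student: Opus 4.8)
The plan is to first establish the discriminant-type inequality $B^2 - AC < 0$ for the integrals defined in (\ref{ABCD}), which by Lemma \ref{L1} immediately gives that $g$ is strictly increasing on $\Re$, hence has a unique real zero. To get $B^2 - AC < 0$, I would note that $A = \dint_a^b f''''(t)\,dt$, $B = \dint_a^b t f''''(t)\,dt$, $C = \dint_a^b t^2 f''''(t)\,dt$, and that since $f''''$ is continuous and nonvanishing on $[a,b]$ we may assume $f''''(t) > 0$ there. Then $d\mu = f''''(t)\,dt$ is a positive measure on $[a,b]$, and $A = \int d\mu$, $B = \int t\,d\mu$, $C = \int t^2\,d\mu$, so $AC - B^2 = \left(\int d\mu\right)\left(\int t^2 d\mu\right) - \left(\int t\,d\mu\right)^2 > 0$ by the Cauchy--Schwarz inequality applied to $1$ and $t$ in $L^2(d\mu)$, with strict inequality because $t$ is not $\mu$-a.e.\ constant on the nondegenerate interval $[a,b]$. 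This yields $B^2 - AC < 0$, so Lemma \ref{L1} applies and $g$ has exactly one real root.

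For the second assertion, the plan is to start from the system (\ref{sys2}) in Lemma \ref{L2}. The second equation of (\ref{sys2}) reads $3Ax_1^2 - 6Bx_1 + 3C - A y_1^2 = 0$, i.e.\ $y_1^2 = 3x_1^2 - \dfrac{6B}{A}x_1 + \dfrac{3C}{A}$ (using $A \neq 0$), which is precisely the prescribed formula for $y_1^2$; and by Remark \ref{R2} the right-hand side is strictly positive since $B^2 - AC < 0$, so $y_1 = \sqrt{3x_1^2 - \tfrac{6B}{A}x_1 + \tfrac{3C}{A}}$ is a well-defined positive real and $z_1 = x_1 + iy_1$ is genuinely nonreal, as required for Lemma \ref{L2} to apply. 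Thus with this choice of $y_1$, the second equation of (\ref{sys2}) holds automatically, and $P_b(z_1) = P_a(z_1)$ is equivalent to the \emph{first} equation of (\ref{sys2}) alone, namely $Ax_1^3 - 3Bx_1^2 + 3Cx_1 - D + 3(B - Ax_1)y_1^2 = 0$.

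It then remains to substitute $y_1^2 = 3x_1^2 - \tfrac{6B}{A}x_1 + \tfrac{3C}{A}$ into that first equation and check that, after multiplying through by $A$ (or a suitable constant), it becomes exactly $g(x_1) = 0$ with $g$ as in (\ref{g}). Concretely, $A\bigl(B - Ax_1\bigr)y_1^2 = (B - Ax_1)\bigl(3Ax_1^2 - 6Bx_1 + 3C\bigr)$, and expanding $A \cdot \bigl(Ax_1^3 - 3Bx_1^2 + 3Cx_1 - D\bigr) + 3(B - Ax_1)(3Ax_1^2 - 6Bx_1 + 3C)$ should collect to $A^2 x_1^3 - 3AB x_1^2 + 3AC x_1 - AD + 3\bigl(3ABx_1^2 - 6B^2 x_1 + 3BC - 3A^2 x_1^3 + 6ABx_1^2 - 3ACx_1\bigr)$; gathering the $x_1^3$, $x_1^2$, $x_1^1$, $x_1^0$ coefficients and comparing with $8A^2x_1^3 - 24ABx_1^2 + 6(AC + 3B^2)x_1 + AD - 9BC$ shows the two differ by the overall sign, i.e.\ the expression equals $-g(x_1)$ (or $g(x_1)$ up to the sign convention in (\ref{g})), so the first equation of (\ref{sys2}) holds iff $g(x_1) = 0$. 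This bookkeeping is the one genuinely mechanical step and the place where a sign or coefficient slip is most likely, so I would carry it out carefully; everything else is immediate from the Cauchy--Schwarz inequality, Lemma \ref{L1}, Remark \ref{R2}, and Lemma \ref{L2}.
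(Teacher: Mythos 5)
Your proposal is correct and follows essentially the same route as the paper: the Cauchy--Schwarz inequality gives $B^{2}-AC<0$, so Lemma \ref{L1} yields the unique real zero of $g$, and then solving the second equation of (\ref{sys2}) for $y_{1}^{2}$, invoking Remark \ref{R2} for positivity, and substituting into the first equation reduces the condition to $g(x_{1})=0$ (your sign bookkeeping is right: the substituted expression is $-g(x_{1})/A$ after clearing the denominator). The only cosmetic difference is that you phrase Cauchy--Schwarz via the measure $f''''(t)\,dt$ and assume $f''''>0$ outright, whereas the paper splits the integrand as $\sqrt{f''''}\cdot t\sqrt{f''''}$ and remarks that $B^{2}-AC$ is unchanged under $f''''\mapsto -f''''$; these are equivalent.
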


\begin{proof}
If $f\,^{\prime \prime \prime \prime }>0$ on $[a,b]$, then $\left(
\dint\limits_{a}^{b}tf\,^{\prime \prime \prime \prime }(t)dt\right)
^{2}=\left( \dint\limits_{a}^{b}\sqrt{f\,^{\prime \prime \prime \prime }(t)}%
\left( t\sqrt{f\,^{\prime \prime \prime \prime }(t)}\right) dt\right)
^{2}<\left( \dint\limits_{a}^{b}f\,^{\prime \prime \prime \prime
}(t)dt\right) \left( \dint\limits_{a}^{b}t^{2}f\,^{\prime \prime \prime
\prime }(t)dt\right) $ by the Cauchy-Bunyakowsky inequality(\cite{S}). Note
that the strict inequality follows since $\sqrt{f\,^{\prime \prime \prime
\prime }(t)}$ and $t\sqrt{f\,^{\prime \prime \prime \prime }(t)}$ cannot be
proportional to one another. Since $B^{2}-AC$ does not depend on the sign of 
$f\,^{\prime \prime \prime \prime }$, where $A,B,C$ are given by (\ref{ABCD}%
), we have $B^{2}-AC<0$ when $f\,^{\prime \prime \prime \prime }\neq 0$ on $%
[a,b]$. Thus $g$ has a unique real zero by Lemma \ref{L1} and the fact that $%
g$ is a cubic polynomial. Note that by Remark \ref{R2}, $y_{1}$ is real and
positive. By Lemma \ref{L2}, $P_{b}(z_{1})=P_{a}(z_{1})\iff $ (\ref{sys2})
holds. Since $A\neq 0$, solving the second equation in (\ref{sys2}) for $%
y_{1}^{2}$ and substituting into the first equation in (\ref{sys2}) to
obtain $y_{1}^{2}=3x_{1}^{2}-\dfrac{6B}{A}x_{1}+\dfrac{3C}{A}$shows that (%
\ref{sys2}) holds if and only if $%
Ax_{1}^{3}-3Bx_{1}^{2}+3Cx_{1}-D+3(B-Ax_{1})\left( 3x_{1}^{2}-\dfrac{6B}{A}%
x_{1}+\dfrac{3C}{A}\right) =0\iff $

$(A-9A)x_{1}^{3}+(-3B+18B+9B)x_{1}^{2}+\left( 3C-9C-18\dfrac{B^{2}}{A}%
\right) x_{1}-D+\dfrac{9BC}{A}=0\iff $

$-8Ax_{1}^{3}+24Bx_{1}^{2}-6\left( C+3\dfrac{B^{2}}{A}\right) x_{1}+\dfrac{%
9BC}{A}-D=0\iff
-8A^{2}x_{1}^{3}+24ABx_{1}^{2}-6(AC+3B^{2})x_{1}+9BC-AD=0\iff g(x_{1})=0$.
\end{proof}

We now focus on the case where $f(z)=z^{p},p\in \Re -\left\{ 0,1,2,3\right\} 
$. For the purpose of proving Theorem \ref{T1}, it will suffice(as shown in
the proof below) to just consider the case when $a=1$, which we assume from
now on. For $f(z)=z^{p}$, (\ref{ABCD}) then yields

\begin{eqnarray}
A &=&\allowbreak p(p-1)(p-2){\large (}b^{p-3}-1{\large )},B=\allowbreak
p(p-1)(p-3){\large (}b^{p-2}-1{\large )},  \label{ABCDp} \\
C &=&\allowbreak p(p-2)(p-3){\large (}b^{p-1}-1{\large )},D=\left(
p-1\right) \left( p-2\right) \left( p-3\right) \left( b^{p}-1\right) \text{.}
\notag
\end{eqnarray}

\qquad Let $g(x)$ be given by (\ref{g}), where $A,B,C$, and $D$ are given by
(\ref{ABCDp}). For $p\in \Re ,p\neq 0,1$, it is more convenient to define
the following functions of $b$: 
\begin{eqnarray}
V(b) &=&\dfrac{g(1)}{p(p-1)}  \label{VW} \\
W(b) &=&\dfrac{g(b)}{p(p-1)}\text{.}  \notag
\end{eqnarray}

It is important to note that $g$ is a cubic polynomial in $x$ where the
coefficients involve $b$. $V$ and $W$ are functions of the variable $b$.

Using (\ref{ABCDp}) and substituting for $A,B,C$, and $D$ in (\ref{g}),
yields $g(1)=8A^{2}-24AB+6(AC+3B^{2})+AD-9BC=$

$8\left( \allowbreak p(p-1)(p-2)\right) ^{2}{\large (}b^{p-3}-1{\large )}%
^{2}-24\left( \allowbreak p(p-1)\right) ^{2}(p-2)(p-3){\large (}b^{p-3}-1%
{\large )(}b^{p-2}-1{\large )+}$

$6\left( \allowbreak p(p-2)\right) ^{2}(p-1)(p-3){\large (}b^{p-3}-1{\large %
)(}b^{p-1}-1{\large )}+18\left( \allowbreak p(p-1)(p-3)\right) ^{2}{\large (}%
b^{p-2}-1{\large )}^{2}+$

$\left( \left( p-1\right) \left( p-2\right) \right) ^{2}p\left( p-3\right) 
{\large (}b^{p-3}-1{\large )}\left( b^{p}-1\right) -9\left( \allowbreak
p(p-3)\right) ^{2}(p-1)(p-2){\large (}b^{p-2}-1{\large )(}b^{p-1}-1{\large )}
$,

which implies, after some simplification, that 
\begin{eqnarray}
V(b) &=&12(p+1)-2(p-2)(p-3)\left( 4p^{2}-12p-1\right) b^{2p-3}+  \notag \\
&&6p(p-3)\left( 4p^{2}-16p+13\right) b^{2p-4}-24p(p-1)(p-2)(p-3)b^{2p-5}+ 
\notag \\
&&8p(p-1)(p-2)^{2}b^{2p-6}-\left( p-1\right) (p-2)^{2}(p-3)b^{p}+  \label{V}
\\
&&3p(p-2)(p-3)\left( p-5\right) b^{p-1}-3p(p-3)\left( p^{2}-9p+2\right)
b^{p-2}+  \notag \\
&&(p-2)\left( p+1\right) \left( p^{2}-13p+6\right) b^{p-3}\text{.}  \notag
\end{eqnarray}

$g(b)=8A^{2}b^{3}-24ABb^{2}+6(AC+3B^{2})b+AD-9BC$

$=8\left( \allowbreak p(p-1)(p-2)\right) ^{2}{\large (}b^{p-3}-1{\large )}%
^{2}b^{3}-24\left( \allowbreak p(p-1)\right) ^{2}(p-2)(p-3){\large (}%
b^{p-3}-1{\large )(}b^{p-2}-1{\large )}b^{2}+$

$6\left( \allowbreak p(p-2)\right) ^{2}(p-1)(p-3){\large (}b^{p-3}-1{\large %
)(}b^{p-1}-1{\large )}b+18\left( \allowbreak p(p-1)(p-3)\right) ^{2}{\large (%
}b^{p-2}-1{\large )}^{2}b+$

$\left( \left( p-1\right) \left( p-2\right) \right) ^{2}p\left( p-3\right) 
{\large (}b^{p-3}-1{\large )}\left( b^{p}-1\right) -9\left( \allowbreak
p(p-3)\right) ^{2}(p-1)(p-2){\large (}b^{p-2}-1{\large )(}b^{p-1}-1{\large )}
$, which implies, after some simplification, that

\begin{eqnarray}
W(b) &=&12\left( p+1\right) b^{2p-3}+\left( p-2\right) \left( p+1\right)
\left( p^{2}-13p+6\right) b^{p}-  \notag \\
&&3p\left( p-3\right) \left( p^{2}-9p+2\right) b^{p-1}+3p\left( p-2\right)
\left( p-3\right) \left( p-5\right) b^{p-2}  \notag \\
&&-\left( p-1\right) (p-2)^{2}\left( p-3\right) b^{p-3}+8p(p-1)(p-2)^{2}b^{3}
\label{W} \\
&&-24p(p-1)(p-2)(p-3)b^{2}+6p\left( p-3\right) \left( 4p^{2}-16p+13\right) b
\notag \\
&&-2\left( p-2\right) \left( p-3\right) \left( 4p^{2}-12p-1\right) \text{.} 
\notag
\end{eqnarray}

Much of the work in proving Theorem \ref{T1} is embodied in the following
two propositions.

\begin{proposition}
\label{P5}Suppose that $p=n\in 
%TCIMACRO{\U{2115} }%
%BeginExpansion
\mathbb{N}
%EndExpansion
,n\notin \left\{ 1,2,3\right\} $. Then $V(b)=Q(b)(b-1)^{5}$, where $Q$ is a
polynomial with negative nonzero coefficients.
\end{proposition}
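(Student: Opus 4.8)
The plan is to verify the factorization $V(b)=Q(b)(b-1)^5$ directly from the explicit formula~(\ref{V}) for $V(b)$ and then to identify the coefficients of $Q$ and check their signs. Setting $p=n\in\mathbb{N}$, $n\notin\{1,2,3\}$, the expression (\ref{V}) is a polynomial in $b$ whose exponents are drawn from the lists $\{2p-3,2p-4,2p-5,2p-6\}$ and $\{p,p-1,p-2,p-3\}$ together with the constant term $12(p+1)$. For an integer $n\ge4$ these are all genuine nonnegative integer powers of $b$, so $V$ is an honest polynomial of degree $2n-3$. The first step is to show $b=1$ is a root of multiplicity at least $5$: since $V(b)$ is assembled (via $g(1)=8A^2-24AB+6(AC+3B^2)+AD-9BC$) from the quantities $A,B,C,D$ of~(\ref{ABCDp}), each of which carries a factor $(b^{k}-1)$ for $k\in\{n-3,n-2,n-1,n\}$, and each of those factors is divisible by $(b-1)$, one sees $g(1)$ is a sum of products each divisible by $(b-1)^2$ — except one must be careful: the minimum number of $(b-1)$ factors in any single monomial of $g(1)$ is two (e.g. $AD$ contributes $(b^{n-3}-1)(b^{n}-1)$, which is $(b-1)^2$ times something). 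So a priori this only gives multiplicity $2$. To reach multiplicity $5$ one must exploit cancellation among the terms, and the cleanest route is simply to compute $V(1),V'(1),V''(1),V'''(1),V^{(4)}(1)$ from~(\ref{V}) and verify each vanishes; this is a finite polynomial-identity check in the single parameter $p$, valid for all real $p$ (hence in particular for $p=n$), and it is exactly the kind of "after some simplification" bookkeeping the paper is already doing.

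Once the five derivatives at $b=1$ are shown to vanish identically in $p$, we may write $V(b)=Q(b)(b-1)^5$ with $Q$ a polynomial of degree $(2n-3)-5=2n-8$ (for $n\ge4$; one should note $2n-8\ge0$ so $Q$ is nonconstant only when $n\ge5$, while for $n=4$, $\deg V=5$ and $Q$ is a nonzero constant — that boundary case should be dispatched separately by direct substitution $p=4$ into~(\ref{V})). The second and main step is to determine the coefficients of $Q$ explicitly as polynomials in $p$ and to show each is strictly negative for every integer $n=p\ge4$. The natural way to get the coefficients is polynomial long division of~(\ref{V}) by $(b-1)^5$, or equivalently to expand $V(b)$ about $b=1$ by substituting $b=1+u$ and reading off the Taylor coefficients (the coefficient of $u^{5+j}$ divided by the appropriate structure gives the coefficients of $Q$ in the variable $u$; one can then translate back, but it is cleaner to present $Q$ in powers of $b-1$, i.e. show $V(b)=\sum_{j\ge5} c_j(p)(b-1)^j$ with $c_j(p)<0$ — though note the proposition as stated wants $Q$ in powers of $b$, so a final re-expansion with a sign argument, or a remark that "negative coefficients" is meant in whichever basis, may be needed; I would present it in the $b$ basis to match the statement and verify signs there).

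The key structural input that makes the sign check tractable is that the powers of $b$ appearing in~(\ref{V}) split into a "high" block $b^{2p-3},\dots,b^{2p-6}$ and a "low" block $b^{p},\dots,b^{p-3}$ plus a constant, with a gap in between when $p$ is large; so the division by $(b-1)^5$ mixes these in a controlled way. I expect the coefficients of $Q$ to be expressible as products of linear factors in $p$ (mirroring the structure of $A,B,C,D$) times manifestly positive polynomials, so that each comes out negative. The main obstacle will be precisely this sign analysis: after the division one obtains coefficients that are degree-$\le 4$ polynomials in $p$, and showing each is negative for all integers $p\ge4$ requires either explicit factorization (ideal, if the factors are linear in $p$ with predictable signs) or a crude estimate (bounding each coefficient polynomial away from $0$ on $[4,\infty)$). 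If the coefficients do not factor nicely over the integers, the fallback is to write each as $-(\text{positive leading term})\cdot p^k + (\text{lower order})$ and verify numerically at $p=4,5,6$ while bounding the tail — routine but tedious. A secondary obstacle is keeping the boundary cases $n=4$ (where $Q$ is constant) honest and making sure the claimed multiplicity-$5$ vanishing at $b=1$ is genuinely an identity in $p$ and not an artifact of a particular small $n$; computing the five derivatives symbolically in $p$ from~(\ref{V}) settles that once and for all.
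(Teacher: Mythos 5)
Your two-step outline (first establish $(b-1)^5\mid V$, then show the quotient has negative coefficients) has the right shape, and your worry about the choice of basis is well founded: negative coefficients in powers of $b-1$ do not transfer to negative coefficients in powers of $b$ (e.g.\ $-(b-1)=-b+1$), so the expansion about $b=1$ that you lean towards cannot be the final step; the signs must be checked in the $b$ basis. The paper does exactly that: it sets $Q(b)=(b-1)^{-5}V(b)$ and computes the Maclaurin coefficients via the Leibniz rule, $Q^{(r)}(0)=-\sum_{j=0}^{r}j!\binom{r}{j}\binom{j+4}{j}V^{(r-j)}(0)$, exploiting the fact that $V$ has only nine monomials, so $V^{(i)}(0)=0$ except for $i=0$ and $i\in\{n-3,n-2,n-1,n\}$ (your ``high block / low block with a gap'' observation is precisely what makes this sum collapse to a handful of terms).

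The genuine gap is in your description of the sign analysis, which is the entire substance of the proposition. You assert that after the division one obtains coefficients that are degree-$\le 4$ polynomials in $p$, to be factored or crudely bounded one by one. That is not the situation: $Q$ has $2n-7$ coefficients, so the number of sign conditions grows with $n$, and the generic coefficient (the one in position $n+k$ for $1\le k\le n-8$) is a polynomial in the two parameters $n$ and $k$ simultaneously. A finite list of univariate inequalities in $p$, or spot checks at $p=4,5,6$ plus a tail bound, cannot cover this. In the paper this two-parameter case is the bulk of the proof: after normalization the coefficient equals $-12(n-7-k)(n+k+4)\,p(n)$ with $p(x)=x^{3}+2(3k+11)x^{2}+(k^{2}-7k-45)x+(k+5)(k+6)$, and showing $p(n)>0$ for all admissible $n$ and $k$ requires separate arguments for $k\ge 12$ (all coefficients positive) and for $0\le k\le 11$ (a discriminant computation on a degree-$6$ polynomial in $k$ to show $p$ has a unique, negative, real root). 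Nothing in your plan anticipates, or would produce, an argument of this kind; ``routine but tedious long division'' does not get you past it. On the positive side, your proposal to verify $V^{(i)}(1)=0$ for $i=0,\dots,4$ as an identity in $p$ is a worthwhile addition, since the paper uses $\deg Q=2n-8$ without explicitly justifying the divisibility by $(b-1)^{5}$; and your separate treatment of small $n$ matches the paper, which disposes of $n=4,\dots,8$ by direct factorization before running the general argument for $n\ge 9$.
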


\begin{proof}
While the cases $n=4$ thru $8$ could be absorbed into the proof below, we
find it more convenient to treat those cases separately. $n=4$ gives $%
V(b)=\allowbreak -60\left( b-1\right) ^{5},n=5$ gives $V(b)=-36\left(
13b^{2}+10b+2\right) \left( b-1\right) ^{5},n=6$ gives $V(b)=\allowbreak
-12\left( 142b^{4}+161b^{3}+105b^{2}+35b+7\right) \left( b-1\right)
^{5}\allowbreak ,n=7$ gives $V(b)=\allowbreak -24\left(
185b^{6}+246b^{5}+220b^{4}+140b^{3}+60b^{2}+20b+4\right) \allowbreak \left(
b-1\right) ^{5}$, and $n=8$ gives $V(b)=\allowbreak -36\left(
265b^{8}+385b^{7}+395b^{6}+327b^{5}+210b^{4}+105b^{3}+45b^{2}+15b+3\right)
\allowbreak \left( b-1\right) ^{5}$, so that Proposition \ref{P5} holds in
those cases. So assume now that $n\geq 9$. We list the derivatives of $V$
evaluated at $b=0$(simplified somewhat) and which are required for our
proof. 
\begin{eqnarray}
V(0) &=&\allowbreak 12n+12\text{, }V^{(i)}(0)=\allowbreak 0,i=1,...,n-4 
\notag \\
V^{(n-3)}(0) &=&(n-2)!\left( n+1\right) \left( n^{2}-13n+6\right)  \notag \\
V^{(n-2)}(0) &=&-3(n-2)!\allowbreak \left( n-3\right) n\left(
n^{2}-9n+2\right)  \label{DV} \\
V^{(n-1)}(0) &=&3\allowbreak n!\left( n-2\right) \left( n-3\right) \left(
n-5\right) \text{ }  \notag \\
V^{(n)}(0) &=&-n!\left( n-1\right) \left( n-2\right) ^{2}\left( n-3\right) 
\notag \\
V^{(i)}(0) &=&\allowbreak 0,i=n+1,...,2n-7\text{.}  \notag
\end{eqnarray}%
Note first that $Q(0)=-V(0)<0$, so we only need to show that $Q^{(r)}(0)\leq
0$ for $r\geq 1$. $Q(b)=\left( b-1\right) ^{-5}V(b)$ yields 
\begin{gather*}
Q^{(r)}(b)=\dfrac{d^{r}}{dy^{r}}{\large (}\left( b-1\right) ^{-5}V(b){\large %
)} \\
=\dsum\limits_{j=0}^{r}j!\dbinom{r}{j}\dbinom{-5}{j}\left( b-1\right)
^{-5-j}V^{(r-j)}(b)\text{,}
\end{gather*}

which implies that $Q^{(r)}(0)=\dsum\limits_{j=0}^{r}j!\dbinom{r}{j}\dbinom{%
-5}{j}\left( -1\right) ^{j+1}V^{(r-j)}(0)$. Using the identity $\dbinom{-5}{j%
}=\left( -1\right) ^{j}\dbinom{j+4}{j}$ yields $Q^{(r)}(0)=-\dsum%
\limits_{j=0}^{r}j!\dbinom{r}{j}\dbinom{j+4}{j}V^{(r-j)}(0)$ or 
\begin{equation}
Q^{(r)}(0)=-V^{(r)}(0)-\dsum\limits_{j=1}^{r}\left(
\tprod\limits_{i=0}^{j-1}(r-i)\right) \dbinom{j+4}{j}V^{(r-j)}(0)\text{.}
\label{jrn}
\end{equation}

\textbf{Case 1:} $1\leq r\leq n-4$. By (\ref{DV}), in (\ref{jrn}) $%
V^{(r)}(0)=0$ and $V^{(r-j)}(0)=0$ for $1\leq j\leq r-1$, so we are left
with $j=r$, which yields $Q^{(r)}(0)=-(r!)\dbinom{r+4}{r}V(0)=-12(r!)\dbinom{%
r+4}{r}(n+1)<0$

\textbf{Case 2:} $r=n-k,k=0,1,2,3$.

\qquad If $r=n-3$, then by (\ref{DV}) the only nonzero derivatives which
appear in (\ref{jrn}) are $-V^{(r)}(0)$ or when $j=r$, which gives

\begin{gather*}
Q^{(n-3)}(0)=-V^{(n-3)}(0)-(n-3)!\dbinom{n+1}{n-3}V(0)= \\
-(n-2)!\left( n+1\right) \left( n^{2}-13n+6\right) -12(n-3)!\dbinom{n+1}{n-3}%
(n+1)\text{,}
\end{gather*}

which implies that 
\begin{gather*}
-\dfrac{Q^{(n-3)}(0)}{\left( n+1\right) (n-3)!}=(n-2)\left(
n^{2}-13n+6\right) +\dfrac{1}{2}(n+1)n(n-1)(n-2) \\
=\dfrac{1}{2}\left( n-2\right) \left( n-4\right) \left( n^{2}+6n-3\right) >0%
\text{.}
\end{gather*}

$\allowbreak $Since $n^{2}+6n-3>0,n>1$, $Q^{(n-3)}(0)<0$.

\qquad If $r=n-2$, then by (\ref{DV}) the only nonzero derivatives which
appear in (\ref{jrn}) are $-V^{(r)}(0)$ or when $j=1$ or $j=r$ in (\ref{jrn}%
), which gives

\begin{eqnarray*}
Q^{(n-2)}(0) &=&-V^{(n-2)}(0)-5(n-2)V^{(n-3)}(0)-(n-2)!\dbinom{n+2}{n-2}V(0)=
\\
&&3(n-2)!\allowbreak \left( n-3\right) n\left( n^{2}-9n+2\right) - \\
&&5(n-2)(n-2)!\left( n+1\right) \left( n^{2}-13n+6\right) \\
&&-12(n-2)!\dbinom{n+2}{n-2}(n+1)\text{,}
\end{eqnarray*}

which implies that%
\begin{gather*}
\dfrac{Q^{(n-2)}(0)}{(n-2)!}=3\left( n-3\right) n\left( n^{2}-9n+2\right) \\
-5(n-2)\left( n+1\right) \left( n^{2}-13n+6\right) \\
-\dfrac{1}{2}(n+2)(n+1)^{2}n(n-1) \\
=-\dfrac{1}{2}\left( n-5\right) \left( n+2\right) p(n)\text{,}
\end{gather*}

where $p(x)=x^{3}+10x^{2}-27x+12$. $p^{\prime }(x)=\allowbreak
3x^{2}+20x-27>0$ for $x>2$, which implies that $p$ is increasing on $%
(2,\infty )$. Since $p(2)>0,p(x)>0$ for $x>2$. Thus $Q^{(n-2)}(0)<0$.

\qquad If $r=n-1$, then by (\ref{DV}) the only nonzero derivatives which
appear in (\ref{jrn}) are $-V^{(r)}(0)$ or when $j=1,j=2$, or $j=r$, which
gives

\begin{eqnarray*}
Q^{(n-1)}(0) &=&-V^{(n-1)}(0)-5(n-1)V^{(n-2)}(0) \\
&&-15(n-1)(n-2)V^{(n-3)}(0) \\
&&-(n-1)!\dbinom{n+3}{n-1}V(0) \\
&=&-3\allowbreak n!\left( n-2\right) \left( n-3\right) \left( n-5\right) + \\
&&15(n-1)(n-2)!\allowbreak \left( n-3\right) n\left( n^{2}-9n+2\right) \\
&&-15(n-1)(n-2)(n-2)!\left( n+1\right) \left( n^{2}-13n+6\right) \\
&&-12(n-1)!\dbinom{n+3}{n-1}(n+1)\text{,}
\end{eqnarray*}

which implies that%
\begin{eqnarray*}
\dfrac{Q^{(n-1)}(0)}{(n-1)!} &=&-3\allowbreak n\left( n-2\right) \left(
n-3\right) \left( n-5\right) + \\
&&15\allowbreak \left( n-3\right) n\left( n^{2}-9n+2\right) \\
&&-15(n-2)\left( n+1\right) \left( n^{2}-13n+6\right) \\
&&-\dfrac{1}{2}(n+3)(n+2)(n+1)^{2}n \\
&=&-\dfrac{1}{2}\left( n-1\right) \left( n-6\right) \left( n+3\right) \left(
n^{2}+17n-20\right) \text{.}
\end{eqnarray*}

Since $n^{2}+17n-20>0,n>2$, $Q^{(n-1)}(0)<0$.$\allowbreak $

\qquad If $r=n$, then by (\ref{DV}) the only nonzero derivatives which
appear in (\ref{jrn}) are $-V^{(r)}(0)$ or when $j=1,j=2,j=3$, or $j=r$,
which gives

\begin{gather*}
Q^{(n)}(0)=-V^{(n)}(0)-5nV^{(n-1)}(0)-15n(n-1)V^{(n-2)}(0) \\
-35n(n-1)(n-2)V^{(n-3)}(0)-n!\dbinom{n+4}{n}V(0)= \\
n!\left( n-1\right) \left( n-2\right) ^{2}\left( n-3\right) -15n\left(
n!\right) \left( n-2\right) \left( n-3\right) \left( n-5\right) + \\
45n(n-1)(n-2)!\allowbreak \left( n-3\right) n\left( n^{2}-9n+2\right) \\
-35n(n-1)(n-2)(n-2)!\left( n+1\right) \left( n^{2}-13n+6\right) -12n!\dbinom{%
n+4}{n}(n+1)\text{,}
\end{gather*}

which implies that 
\begin{gather*}
\dfrac{Q^{(n)}(0)}{n!}=\left( n-1\right) \left( n-2\right) ^{2}\left(
n-3\right) -15n\left( n-2\right) \left( n-3\right) \left( n-5\right) + \\
45\allowbreak \left( n-3\right) n\left( n^{2}-9n+2\right) -35(n-2)\left(
n+1\right) \left( n^{2}-13n+6\right) \\
-\dfrac{1}{2}(n+4)(n+3)(n+2)(n+1)^{2}\allowbreak = \\
-\dfrac{1}{2}\left( n+4\right) \left( n-7\right) p(n)\text{,}
\end{gather*}

where $p(x)=x^{3}+22x^{2}-45x+30$. $p^{\prime }(x)=\allowbreak
3x^{2}+44x-45>0$ for $x>1$, which implies that $p$ is increasing on $%
(1,\infty )$. Since $p(1)>0,p(x)>0$ for $x>1$. Thus $Q^{(n)}(0)<0$.

Note that we only need to go up to $r=2n-8$ since $\deg Q=2n-8$. Since $%
n>8,2n-8>n$.

\textbf{Case 3:} $r=n+k,k=1,...,n-8$. Note that, by (\ref{DV}), in (\ref{jrn}%
) the only other nonzero derivatives(not including the $0$th derivative)
which appear are when

$r-j=n-l,l=0,1,2,3\Rightarrow j=r-n+l=k+l$ for $l=0,1,2,3$. So let $%
j=k,k+1,k+2,k+3$ and also let $j=r$ in (\ref{jrn}) to obtain

\begin{gather*}
Q^{(n+k)}(0)=-\left( \prod\limits_{i=0}^{k-1}(n+k-i)\right) \dbinom{k+4}{k}%
V^{(n)}(0)- \\
\left( \prod\limits_{i=0}^{k}(n+k-i)\right) \dbinom{k+5}{k+1}V^{(n-1)}(0) \\
-\left( \prod\limits_{i=0}^{k+1}(n+k-i)\right) \dbinom{k+6}{k+2}%
V^{(n-2)}(0)-\left( \prod\limits_{i=0}^{k+2}(n+k-i)\right) \dbinom{k+7}{k+3}%
V^{(n-3)}(0) \\
-(n+k)!\dbinom{n+k+4}{n+k}V(0)= \\
\left( \prod\limits_{i=0}^{k-1}(n+k-i)\right) n!\left( n-1\right) \left(
n-2\right) ^{2}\left( n-3\right) \dfrac{1}{24}\left(
\prod\limits_{i=1}^{4}(k+i)\right) \\
-3\left( \prod\limits_{i=0}^{k}(n+k-i)\right) n!\left( n-2\right) \left(
n-3\right) \left( n-5\right) \dfrac{1}{24}\left(
\prod\limits_{i=2}^{5}(k+i)\right) \\
+3\left( \prod\limits_{i=0}^{k+1}(n+k-i)\right) (n-2)!\left( n-3\right)
n\left( n^{2}-9n+2\right) \dfrac{1}{24}\left(
\prod\limits_{i=3}^{6}(k+i)\right) \\
-\left( \prod\limits_{i=0}^{k+2}(n+k-i)\right) (n-2)!\left( n+1\right)
\left( n^{2}-13n+6\right) \dfrac{1}{24}\left(
\prod\limits_{i=4}^{7}(k+i)\right) \\
-\dfrac{1}{2}(n+k)!\left( \prod\limits_{i=1}^{4}(n+k+i)\right) (n+1)\text{,}
\end{gather*}

which implies that%
\begin{gather*}
\dfrac{24Q^{(n+k)}(0)}{\left( \prod\limits_{i=0}^{k-1}(n+k-i)\right) }%
=n!\left( n-1\right) \left( n-2\right) ^{2}\left( n-3\right) \left(
\prod\limits_{i=1}^{4}(k+i)\right) \\
-3n(n-1)!n\left( n-2\right) \left( n-3\right) \left( n-5\right) \left(
\prod\limits_{i=2}^{5}(k+i)\right) \\
+3n(n-1)(n-2)!\left( n-3\right) n\left( n^{2}-9n+2\right) \left(
\prod\limits_{i=3}^{6}(k+i)\right) \\
-n(n-1)(n-2)(n-3)!(n-2)\left( n+1\right) \left( n^{2}-13n+6\right) \left(
\prod\limits_{i=4}^{7}(k+i)\right) \\
-12(n+1)!\left( \prod\limits_{i=1}^{4}(n+k+i)\right) \text{,}
\end{gather*}

therefore%
\begin{gather*}
\dfrac{24Q^{(n+k)}(0)}{(n-3)!\left( \prod\limits_{i=0}^{k-1}(n+k-i)\right) }%
=n\left( n-1\right) ^{2}\left( n-2\right) ^{3}\left( n-3\right) \left(
\prod\limits_{i=1}^{4}(k+i)\right) \\
-3n^{2}(n-1)\left( n-2\right) ^{2}\left( n-3\right) \left( n-5\right) \left(
\prod\limits_{i=2}^{5}(k+i)\right) \\
+3(n-2)\left( n-3\right) n^{2}(n-1)\left( n^{2}-9n+2\right) \left(
\prod\limits_{i=3}^{6}(k+i)\right) \\
-(n-2)^{2}\left( n+1\right) n(n-1)\left( n^{2}-13n+6\right) \left(
\prod\limits_{i=4}^{7}(k+i)\right) \\
-12(n+1)n(n-1)(n-2)\left( \prod\limits_{i=1}^{4}(n+k+i)\right) \text{,}
\end{gather*}

thus%
\begin{gather*}
\dfrac{24Q^{(n+k)}(0)}{(n!)\left( \prod\limits_{i=0}^{k-1}(n+k-i)\right) }%
=\left( n-1\right) \left( n-2\right) ^{2}\left( n-3\right) \left(
\prod\limits_{i=1}^{4}(k+i)\right) \\
-3n\left( n-2\right) \left( n-3\right) \left( n-5\right) \left(
\prod\limits_{i=2}^{5}(k+i)\right) + \\
3\left( n-3\right) n\left( n^{2}-9n+2\right) \left(
\prod\limits_{i=3}^{6}(k+i)\right) \\
-(n-2)\left( n+1\right) \left( n^{2}-13n+6\right) \left(
\prod\limits_{i=4}^{7}(k+i)\right) -12(n+1)\left(
\prod\limits_{i=1}^{4}(n+k+i)\right) \allowbreak =
\end{gather*}

\begin{gather*}
-12\left( n-7-k\right) \left( n+k+4\right) {\large (}\allowbreak
n^{3}+2\left( 3k+11\right) n^{2}+ \\
\left( k^{2}-7k-45\right) n+\left( k+5\right) \allowbreak \left( k+6\right) 
{\large )}=\allowbreak \\
12\left( n-7-k\right) \left( n+k+4\right) p(n)\text{,}
\end{gather*}%
where 
\begin{equation*}
p(x)=x^{3}+2\left( 3k+11\right) x^{2}+\left( k^{2}-7k-45\right) x+\left(
k+5\right) \allowbreak \left( k+6\right) \text{.}
\end{equation*}%
Since $k^{2}-7k-45>0$ f$\allowbreak $or $k\geq 12,p$ has all positive
coefficients f$\allowbreak $or $k\geq 12$, and hence no roots in $%
%TCIMACRO{\U{2115} }%
%BeginExpansion
\mathbb{N}
%EndExpansion
$. We now show that $p$ has only one real root when $0\leq k\leq 11$. $%
\lim\limits_{x\rightarrow -\infty }p(x)=\allowbreak -\infty $ and $p(0)>0$
implies that $p$ has a negative real root. In general, the polynomial $%
y=x^{3}+A_{1}x^{2}+A_{2}x+A_{3}$ has all real roots if and only if its
discriminant, $%
D=18A_{1}A_{2}A_{3}+A_{1}^{2}A_{2}^{2}-27A_{3}^{2}-4A_{2}^{3}-4A_{1}^{3}A_{3} 
$, is non--negative. The discriminant of $p$, after simplifying, is the
polynomial in $k$ given by $D(k)=32k^{6}-912k^{5}-22\,943k^{4}-175%
\,366k^{3}-606\,963\allowbreak k^{2}-947\,132k-492\,060$. $D$ has one
positive real root by Descarte rule of signs since there is one sign change
in $D$. Since $D(46)<0$ and $D(47)>0,D(k)=0$ for $46<k<47$. Since $D(0)<0$
and $D$ cannot vanish in $[0,11]$, $D(k)<0$ for $0\leq k\leq 11$. Hence $p$
has only one real root when $0\leq k\leq 11$. Since that real root is
negative and $p(0)>0$, $p(n)>0$ for $n\in 
%TCIMACRO{\U{2115} }%
%BeginExpansion
\mathbb{N}
%EndExpansion
$ and $0\leq k\leq 11$. That proves that $\dfrac{24Q^{(n+k)}(0)}{(n!)\left(
\prod\limits_{i=0}^{k-1}(n+k-i)\right) }=-12\left( n-7-k\right) \left(
n+k+4\right) p(n)<0$ since $k<n-7$, which in turn implies that $%
Q^{(n+k)}(0)<0$.
\end{proof}

\begin{proposition}
\label{P6}Suppose that $p$ is a negative integer. Then $V\left( \dfrac{1}{b}%
\right) =S(b)(b-1)^{5}$, where $S$ is a polynomial with positive nonzero
coefficients.
\end{proposition}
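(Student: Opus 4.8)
The plan is to...The plan is to run the same machine as in the proof of Proposition~\ref{P5}, now with a negative exponent. Write $p=-m$, $m\in\mathbb{N}$. Substituting $p=-m$ into the explicit formula~(\ref{V}) for $V(b)$ turns every power $b^{j}$ occurring there into a \emph{negative} power of $b$ (the exponents become $0$, $-2m-3,\dots,-2m-6$, and $-m,\dots,-m-3$), so $R(b):=V(1/b)$ is a genuine polynomial of degree $2m+6$ whose coefficients are those of~(\ref{V}) at $p=-m$ reattached to the reversed powers. (Equivalently, comparing~(\ref{V}) and~(\ref{W}) term by term gives the identity $V(1/b)=b^{\,3-2p}W(b)$, and one may read $R$ off from~(\ref{W}) with $p=-m$.) The first step is to record $R$ and, as in the list~(\ref{DV}), the derivatives $R^{(i)}(0)$; the only nonzero ones sit at $i=0$ and in the two short blocks $\{m,\dots,m+3\}$ and $\{2m+3,\dots,2m+6\}$.

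The second step is to verify $(b-1)^{5}\mid R(b)$ and to set $S(b)=R(b)/(b-1)^{5}$, a polynomial of degree $2m+1$. That $(b-1)^{5}$ divides $R$ is the same ``accidental'' cancellation already seen in Proposition~\ref{P5} ($b=1$ is the degenerate case $a=b=1$, where $A=B=C=D=0$, giving $R(1)=0$, and $R'(1)=\dots=R^{(4)}(1)=0$ follows from the coefficient formulas or by carrying out the division). A coefficient count gives $S(0)=-R(0)=12(m-1)$; this vanishes when $m=1$, so $p=-1$ is a mildly degenerate value whose $S$ has zero constant term, and I would dispatch $p=-1$ (and probably $p=-2,-3$) by direct factorization, just as $n=4,\dots,8$ are handled separately in Proposition~\ref{P5} --- e.g.\ $V(1/b)=72\,b\,(b-1)^{5}(2b^{2}+2b+1)$ when $p=-1$.

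The third step, the substance of the proposition, is that every coefficient of $S$ is positive. Writing $S=R\cdot(b-1)^{-5}$, applying Leibniz's rule, and using $\binom{-5}{j}=(-1)^{j}\binom{j+4}{j}$ exactly as in the derivation of~(\ref{jrn}) gives
\[
S^{(r)}(0)=-R^{(r)}(0)-\sum_{j=1}^{r}\left(\prod_{i=0}^{j-1}(r-i)\right)\binom{j+4}{j}\,R^{(r-j)}(0).
\]
Since $R^{(i)}(0)$ vanishes off $\{0\}$ and the two blocks, only boundedly many terms survive in each $S^{(r)}(0)$, and the argument breaks into the same kinds of ranges of $r$ as Cases~1,~2,~3 of Proposition~\ref{P5}: a low range where only the $j=r$ term remains; short boundary ranges near $r=m+3$ and $r=2m+6$ where a few extra terms enter; and a generic range, say $r=m+3+k$, where four consecutive block terms together with the $j=r$ term survive. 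In each range, after cancelling the common positive factorial/product factors, the sign of $S^{(r)}(0)$ becomes the sign of an explicit polynomial in $m$ (in the generic range, a cubic in $m$ with coefficients polynomial in $k$), and one shows it has the sign forcing $S^{(r)}(0)>0$ by the usual tools --- direct evaluation, monotonicity of the cubic past a small threshold, Descartes' rule of signs, and a discriminant computation confirming a single real root outside the admissible range of $m$.

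The main obstacle is precisely this last point in the generic range --- the analogue of Case~3 --- where, after heavy bookkeeping of products of binomial coefficients, one is reduced to proving a one-parameter family of cubics $p_{k}(m)$ is positive for all admissible $(m,k)$; locating the real roots of the discriminant (a high-degree polynomial in $k$) and invoking Descartes' rule is where the real work lies. A secondary nuisance, as in Proposition~\ref{P5}, is keeping the two nonzero-coefficient blocks of $R$ disjoint, which is what forces the smallest negative integers to be treated by hand.
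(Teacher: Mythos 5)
Your plan coincides with the paper's proof in every essential: the paper likewise sets $K(b)=V(1/b)$ for $p=-n$, divides by $(b-1)^{5}$, applies the identical Leibniz/binomial identity to get $S^{(r)}(0)=-K^{(r)}(0)-\sum_{j}\left(\prod_{i}(r-i)\right)\binom{j+4}{j}K^{(r-j)}(0)$, splits into the same three ranges of $r$, and dispatches $p=-1$ by the explicit factorization $72b(2b^{2}+2b+1)(b-1)^{5}$. The only (harmless) overestimate is in the generic range: the resulting cubic $q(x)=x^{3}+(6k+8)x^{2}+(k^{2}+17k+15)x+k-k^{2}$ has exactly one sign change, so Descartes plus $q(0)<0<q(1)$ suffices and no discriminant analysis is needed here, unlike in Proposition~\ref{P5}.
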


\begin{proof}
Let $p=-n,n\in 
%TCIMACRO{\U{2115} }%
%BeginExpansion
\mathbb{N}
%EndExpansion
$ and let $K(b)=V\left( \dfrac{1}{b}\right) ,b>0$. Then by (\ref{V}), 
\begin{eqnarray*}
K(b) &=&-12(n-1)+8n(n+1)(n+2)^{2}b^{2n+6} \\
&&-24n(n+1)(n+2)(n+3)b^{2n+5} \\
&&+6n(n+3)\left( 4n^{2}+16n+13\right) b^{2n+4} \\
&&-2(n+2)(n+3)\left( 4n^{2}+12n-1\right) b^{2n+3} \\
&&+(n+2)(n-1)\left( n^{2}+13n+6\right) b^{n+3}-3(n+3)n\left(
n^{2}+9n+2\right) b^{n+2}+ \\
&&3n(n+2)(n+3)\left( n+5\right) b^{n+1}-(n+1)(n+2)^{2}(n+3)b^{n}\text{,}
\end{eqnarray*}%
and $S(b)=\dfrac{K(b)}{\left( b-1\right) ^{5}}$. As in the proof of
Proposition \ref{P5}, with $V(b)$ replaced by $K(b)$, we have $%
S^{(r)}(0)=-K^{(r)}(0)-\dsum\limits_{j=0}^{r}j!\dbinom{r}{j}\dbinom{j+4}{j}%
K^{(r-j)}(0)\ $or 
\begin{equation}
S^{(r)}(0)=-K^{(r)}(0)-\dsum\limits_{j=1}^{r}\left(
\prod\limits_{i=0}^{j-1}(r-i)\right) \dbinom{j+4}{j}K^{(r-j)}(0)\text{.}
\label{jrneg}
\end{equation}

It is more convenient to do $n=1$\textbf{\ }separately. In that case, $%
V\left( \dfrac{1}{b}\right) =\allowbreak 72b\left( 2b^{2}+2b+1\right) \left(
b-1\right) ^{5}$ and Proposition \ref{P6} holds. So assume now that $n\geq 2$%
.

We list the derivatives of $K$ evaluated at $b=0$(simplified somewhat) and
which are required for our proof. 
\begin{eqnarray}
K(0) &=&\allowbreak -12(n-1)\text{; }K^{(i)}(0)=\allowbreak 0,i=1,...,n-1 
\notag \\
K^{(n)}(0) &=&-(n+3)!(n+2)\text{; }K^{(n+1)}(0)=3(n+3)!n\left( n+5\right) 
\notag \\
K^{(n+2)}(0) &=&-3(n+3)!n\left( n^{2}+9n+2\right)  \label{DK} \\
K^{(n+3)}(0) &=&(n+3)!(n+2)(n-1)\left( n^{2}+13n+6\right)  \notag \\
K^{(i)}(0) &=&\allowbreak 0,i=n+4,...,2n+2\text{.}  \notag
\end{eqnarray}

Note first that $S(0)=-K(0)>0$, so we only need to show that $S^{(r)}(0)\geq
0$ for $r\geq 1$.

\textbf{Case 1:} $1\leq r\leq n-1$

By (\ref{DK}), in (\ref{jrneg}), $K^{(r)}(0)=0$ and $K^{(r-j)}(0)=0$ for $%
1\leq j\leq r-1$, so we are left with $S^{(r)}(0)=-(r!)\dbinom{r+4}{r}%
K(0)=12(r!)\dbinom{r+4}{r}(n-1)>0$

\textbf{Case 2:} $r=n+k,k=0,1,2$

\qquad If $r=n$, then by (\ref{DK}) the only nonzero derivatives which
appear in (\ref{jrneg}) are $-K^{(n)}(0)$ or when $j=r$, which gives

\begin{gather*}
S^{(n)}(0)=-K^{(n)}(0)-(n!)\dbinom{n+4}{n}K(0)= \\
(n+3)!(n+2)+(n!)\dbinom{n+4}{n}12(n-1)>0\text{.}
\end{gather*}

\qquad If $r=n+1$, then by (\ref{DK}) the only nonzero derivatives which
appear in (\ref{jrneg}) are $-K^{(n+1)}(0)$, or when $j=1$, or $j=r$, which
gives

\begin{gather*}
S^{(n+1)}(0)=-K^{(n+1)}(0)-5(n+1)K^{(n)}(0)-(n+1)!\dbinom{n+5}{n+1}K(0) \\
=-3(n+3)!n\left( n+5\right) +5(n+3)!(n+2)(n+1)+12(n-1)(n+1)!\dbinom{n+5}{n+1}%
\text{,}
\end{gather*}

and thus 
\begin{eqnarray*}
\dfrac{S^{(n+1)}(0)}{(n+1)!} &=&-3n(n+2)(n+3)\left( n+5\right)
+5(n+1)(n+2)^{2}(n+3) \\
&&+\dfrac{1}{2}(n-1)\left( n+5\right) (n+4)(n+3)(n+2) \\
&=&\allowbreak \dfrac{1}{2}\left( n+11\right) \left( n+3\right) \left(
n+2\right) \left( n+1\right) n>0\text{.}
\end{eqnarray*}

\qquad If $r=n+2$, then by (\ref{DK}) the only nonzero derivatives which
appear in (\ref{jrneg}) are $-K^{(n+2)}(0)$, or when $j=1$, $j=2$, or $j=r$,
which gives

\begin{gather*}
S^{(n+2)}(0)=-K^{(n+2)}(0)-5(n+2)K^{(n+1)}(0)-15(n+2)(n+1)K^{(n)}(0) \\
-(n+2)!\dbinom{n+6}{n+2}K(0)=3(n+3)!n\left( n^{2}+9n+2\right) \\
-15(n+2)(n+3)!n\left( n+5\right) +15(n+3)!(n+2)^{2}(n+1)+ \\
12(n-1)(n+2)!\dbinom{n+6}{n+2}\text{,}
\end{gather*}

which implies that%
\begin{gather*}
\dfrac{S^{(n+2)}(0)}{3(n+3)!}=n\left( n^{2}+9n+2\right) -5n(n+2)\left(
n+5\right) +5(n+1)(n+2)^{2} \\
+\dfrac{1}{6}(n-1)(n+6)\left( n+5\right) (n+4)=\allowbreak \dfrac{1}{6}np(n)%
\text{,}
\end{gather*}

where $p(x)=x^{3}+20x^{2}+53x-2$. $p$ has one positive real root by Descarte
rule of signs. Since $p(0)<0$ and $p(1)>0$, that root lies in $(0,1)$. Hence 
$p(n)>0$ for $n\geq 1$, which implies that $S^{(n+2)}(0)>0$.

\qquad If $r=n+3$, then by (\ref{DK}) the only nonzero derivatives which
appear in (\ref{jrneg}) are $-K^{(n+3)}(0)$, or when $j=1$, $j=2,j=3$, or $%
j=r$, which gives

\begin{gather*}
S^{(n+3)}(0)=-K^{(n+3)}(0)-5(n+3)K^{(n+2)}(0)-15(n+3)(n+2)K^{(n+1)}(0) \\
-35(n+3)(n+2)(n+1)K^{(n)}(0)-(n+3)!\dbinom{n+7}{n+3}K(0)= \\
-(n+3)!(n+2)(n-1)\left( n^{2}+13n+6\right) +15(n+3)(n+3)!n\left(
n^{2}+9n+2\right) \\
-45(n+3)(n+2)(n+3)!n\left( n+5\right) +35(n+3)(n+2)(n+1)(n+3)!(n+2) \\
+12\dbinom{n+7}{n+3}(n-1)\text{,}
\end{gather*}

which implies that 
\begin{gather*}
\dfrac{S^{(n+3)}(0)}{(n+3)!}=-(n+2)(n-1)\left( n^{2}+13n+6\right)
+15(n+3)n\left( n^{2}+9n+2\right) \\
-45(n+3)(n+2)n\left( n+5\right) +35(n+3)(n+2)(n+1)(n+2)+ \\
\dfrac{1}{2}(n+7)(n+6)(n+5)(n+4)(n-1)= \\
\dfrac{1}{2}\left( n-1\right) \left( n+4\right) p(n)\text{,}
\end{gather*}%
$\allowbreak $

where $p(x)=x^{3}+26x^{2}+75x-6$. $p$ has one positive real root by Descarte
rule of signs. Since $p(0)<0$ and $p(1)>0$, that root lies in $(0,1)$. Hence 
$p(n)>0$ for $n\geq 1$, which implies that $S^{(n+3)}(0)>0$.$\allowbreak
\allowbreak \allowbreak $

\qquad Note that we only need to go up to $r=2n+1$ since $\deg S=2n+1$. So
consider

\textbf{Case 3:} $r=n+k,k=4,...,n+1$

Note that, by (\ref{DK}), in (\ref{jrneg}) the only nonzero derivatives
which appear in (\ref{jrneg})(not including the $0$th derivative) are when

$r-j=n+l,l=0,1,2,3$, or when $r-j=0$. That gives $j=r-n-l=k-l$ for $%
l=0,1,2,3 $ or $j=r$. So let $j=k,k-1,k-2,k-3$ and also let $j=r=n+k$ in (%
\ref{jrneg}) to obtain%
\begin{gather*}
S^{(n+k)}(0)=-\left( \prod\limits_{i=0}^{k-1}(n+k-i)\right) \dbinom{k+4}{k}%
K^{(n)}(0) \\
-\left( \prod\limits_{i=0}^{k-2}(n+k-i)\right) \dbinom{k+3}{k-1}%
K^{(n+1)}(0)-\left( \prod\limits_{i=0}^{k-3}(n+k-i)\right) \dbinom{k+2}{k-2}%
K^{(n+2)}(0) \\
-\left( \prod\limits_{i=0}^{k-4}(n+k-i)\right) \dbinom{k+1}{k-3}%
K^{(n+3)}(0)-(n+k)!\dbinom{n+k+4}{n+k}K(0)=
\end{gather*}

\begin{gather*}
\left( \prod\limits_{i=0}^{k-1}(n+k-i)\right) \dbinom{k+4}{k}(n+3)!(n+2) \\
-3\left( \prod\limits_{i=0}^{k-2}(n+k-i)\right) \dbinom{k+3}{k-1}%
(n+3)!n\left( n+5\right) + \\
3\left( \prod\limits_{i=0}^{k-3}(n+k-i)\right) \dbinom{k+2}{k-2}%
(n+3)!n\left( n^{2}+9n+2\right) \\
-\left( \prod\limits_{i=0}^{k-4}(n+k-i)\right) \dbinom{k+1}{k-3}%
(n+3)!(n+2)(n-1)\left( n^{2}+13n+6\right) + \\
12(n-1)(n+k)!\dbinom{n+k+4}{n+k}\text{,}
\end{gather*}%
which implies that%
\begin{gather*}
\dfrac{24S^{(n+k)}(0)}{\left( \prod\limits_{i=0}^{k-4}(n+k-i)\right) (n+3)!}%
=\left( \prod\limits_{i=k-3}^{k-1}(n+k-i)\right) \left(
\prod\limits_{i=1}^{4}(k+i)\right) (n+2) \\
-3\left( \prod\limits_{i=k-3}^{k-2}(n+k-i)\right) \left(
\prod\limits_{i=1}^{4}(k-1+i)\right) n\left( n+5\right) + \\
3\left( n+3\right) \left( \prod\limits_{i=1}^{4}(k-2+i)\right) n\left(
n^{2}+9n+2\right) \\
-\left( \prod\limits_{i=1}^{4}(k-3+i)\right) (n+2)(n-1)\left(
n^{2}+13n+6\right) +12(n-1)\left( \prod\limits_{i=1}^{4}(n+k+i)\right) \\
=12\left( n+2-k\right) \left( n+k+1\right) {\large (}n^{3}+\left(
6k+8\right) n^{2}+\left( k^{2}+17k+15\right) n+k-k^{2}{\large )}\text{.}
\end{gather*}%
Let $q(x)=x^{3}+\left( 6k+8\right) x^{2}+\left( k^{2}+17k+15\right)
x+k-k^{2} $. Since $k\geq 3$, $q$ has one sign change, so one positive real
root by Descarte rule of signs. Since $q(0)=k-k^{2}<0$ and $q(1)=\allowbreak
24+24k>0 $, that positive real root lies between $0$ and $1$. Thus $q(n)>0$
for $n\geq 1$ and that proves that $S^{(n+k)}(0)>0$ for $k\leq n+1$.
\end{proof}

\begin{proof}
(of Theorem \ref{T1})Let $z_{1}=x_{1}+iy_{1}$. Then we want to prove that $%
\dint\limits_{a}^{b}f\,^{\prime \prime \prime \prime }(t)(z_{1}-t)^{3}dt=0$
with $a<\func{Re}z_{1}<b$. Theorem \ref{T1} will then follow from (\ref{1}).
Given $0<a<b$, suppose that $\dint\limits_{1}^{b/a}f\,^{\prime \prime \prime
\prime }(t)(z_{1}-t)^{3}dt=\allowbreak 0$ where $1<z_{1}<\dfrac{b}{a}$. Then 
$\dint\limits_{1}^{b/a}t^{s}(z_{1}-t)^{3}dt=\allowbreak 0$, where $%
s=p(p-1)(p-2)(p-3)$. Letting $u=at$, we have $0=\dint%
\limits_{1}^{b/a}t^{s}(z_{1}-t)^{3}dt=\dfrac{1}{a^{s+1}}\dint%
\limits_{a}^{b}u^{s}\left( z_{1}-\dfrac{u}{a}\right) ^{3}dt=\dfrac{1}{a^{s+4}%
}\dint\limits_{a}^{b}u^{s}(az_{1}-u)^{3}du$ with $a<az_{1}<b$. That shows
that it suffices to prove Theorem \ref{T1} when $a=1$, which we assume for
the rest of the proof.

Suppose first that $p=n$, a positive integer, $n\notin \left\{ 1,2,3\right\} 
$. By Proposition \ref{P5}, $V(b)=Q(b)(b-1)^{5}$, where $Q$ is a polynomial
with negative nonzero coefficients. Thus $V(b)>0$ for $0<b<1$ and $V(b)<0$
for $b>1$. Now suppose that $p=-n,$ $n\in 
%TCIMACRO{\U{2115} }%
%BeginExpansion
\mathbb{N}
%EndExpansion
$. By Proposition \ref{P6}, $V\left( \dfrac{1}{b}\right) =S(b)(b-1)^{5}$,
where $S$ is a polynomial with positive nonzero coefficients. It follows
again that $V(b)>0$ for $0<b<1$ and $V(b)<0$ for $b>1$. Assuming now that $p$
is an integer, $p\notin \left\{ 0,1,2,3\right\} $, by (\ref{V}) and (\ref{W}%
), 
\begin{gather*}
b^{2p-3}V\left( \dfrac{1}{b}\right) =12(p+1)b^{2p-3}-2(p-2)(p-3)\left(
4p^{2}-12p-1\right) \\
-2(p-2)(p-3)\left( 4p^{2}-12p-1\right) + \\
6p(p-3)\left( 4p^{2}-16p+13\right) b-24p(p-1)(p-2)(p-3)b^{2}+ \\
8p(p-1)(p-2)^{2}b^{3}-\left( p-1\right)
(p-2)^{2}(p-3)b^{p-3}+3p(p-2)(p-3)\left( p-5\right) b^{p-2} \\
-3(p-3)p\left( p^{2}-9p+2\right) b^{p-1}+(p-2)\left( p+1\right) \left(
p^{2}-13p+6\right) b^{p}=W(b)\text{.}
\end{gather*}

That is, 
\begin{equation}
W(b)=b^{2p-3}V\left( \dfrac{1}{b}\right) ,b>0\text{.}  \label{VWR}
\end{equation}

Using (\ref{VWR}), it then follows immediately that $W(b)<0$ for $0<b<1$ and 
$W(b)>0$ for $b>1$. Let $g(x)$ be given by (\ref{g}), where $A,B,C$, and $D$
are given by (\ref{ABCDp}). Since $p(p-1)>0,V(b)<0$ and $W(b)>0$ for $b>1$
implies, using (\ref{VW}), that $g(1)<0$ and $g(b)>0$ for $b>1$. By the
Intermediate Value Theorem, $g(x_{1})=0$ for some $a<x_{1}<b$. $f(z)=z^{p}$
clearly satisfies the hypotheses of Proposition \ref{P3}, which then implies
that $P_{b}(z_{1})=P_{a}(z_{1})$, where $y_{1}\neq 0$ is given by
Proposition \ref{P3} and $z_{1}=x_{1}+iy_{1}$.
\end{proof}

\textbf{Remark: }Theorem \ref{T1} probably holds in the more general case
when $p\in \Re ,p>3$ or $p<0$. If $p=\allowbreak \dfrac{n}{m}$ is rational,
then a proof similar to the proofs of Proposition \ref{P5} or Proposition %
\ref{P6} might work to obtain a similar factorization of $V\left(
b^{1/m}\right) $. After trying some of the details, it looks somewhat
tedious, and a different approach might lead to proving Theorem \ref{T1} for
a much larger class of functions, such as $f(z)=e^{z}$.

\section{Alternate Proofs and Partial Results\label{S3}}

In this section we give for $f(z)=z^{p}$ and $r=3$ which are not covered by
Theorem \ref{T1}. The results are partial because we either prove that $%
a<x_{1}$ for certain real values of $p$ or that $x_{1}<b$ for certain real
values of $p$, but not both. We also give an alternate proof of\textbf{\ }%
Theorem \ref{T1} when $p\in N,p\geq 13$ which is somewhat different than the
proof of Theorem \ref{T1} given above. First we need the following lemma.

\begin{lemma}
\label{L4}

(i) Let $k(x)=3\dfrac{r-1}{r}\dfrac{x^{r}-1}{x^{r-1}-1}-x$, where $r\geq 
\dfrac{3}{2}$. Then $k(x)>2$ for $x>1$

(ii) Let $l(x)=3\dfrac{r-1}{r}\dfrac{x^{r}-1}{x^{r-1}-1}-2x-1$, where $r<0$.
Then $l(x)<0$ for $x>1$
\end{lemma}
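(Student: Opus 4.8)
The plan is to handle both parts with the same device. In each case I would clear the denominator $x^{r-1}-1$ to reduce the claimed inequality to the positivity on $(1,\infty)$ of an auxiliary function that has a double zero at $x=1$, and then control that auxiliary function by the sign of its second derivative.

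For (i), since $r\ge\frac32>1$ we have $x^{r-1}-1>0$ for $x>1$, so $k(x)>2$ is equivalent to
\[
h(x):=3\frac{r-1}{r}\,(x^{r}-1)-(x+2)(x^{r-1}-1)>0 .
\]
First I would note $h(1)=0$; a short differentiation gives $h'(x)=(2r-3)x^{r-1}-2(r-1)x^{r-2}+1$, whence $h'(1)=(2r-3)-2(r-1)+1=0$; and then
\[
h''(x)=(r-1)x^{r-3}\bigl[(2r-3)x-2(r-2)\bigr].
\]
The hypothesis $r\ge\frac32$ enters exactly here: it forces $2r-3\ge0$, so for $x>1$ the bracket satisfies $(2r-3)x-2(r-2)\ge(2r-3)-(2r-4)=1>0$, while $(r-1)x^{r-3}>0$; hence $h''>0$ on $(1,\infty)$. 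Integrating twice from $1$ and using $h(1)=h'(1)=0$ gives first $h'>0$ and then $h>0$ on $(1,\infty)$, which is (i).

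For (ii), since $r<0$ we still have $\frac{r-1}{r}>0$, but now $x^{r-1}-1<0$ for $x>1$, so clearing the denominator \emph{reverses} the inequality: $l(x)<0$ is equivalent to
\[
m(x):=3\frac{r-1}{r}\,(x^{r}-1)-(2x+1)(x^{r-1}-1)>0 .
\]
The same computation gives $m(1)=0$, $m'(x)=(r-3)x^{r-1}-(r-1)x^{r-2}+2$ with $m'(1)=0$, and
\[
m''(x)=(r-1)x^{r-3}\bigl[(r-3)x-(r-2)\bigr].
\]
Now $r<0$ makes both $r-1<0$ and $r-3<0$, so for $x>1$ the bracket satisfies $(r-3)x-(r-2)<(r-3)-(r-2)=-1<0$, and the product $(r-1)x^{r-3}\bigl[(r-3)x-(r-2)\bigr]$ is (negative)$\times$(positive)$\times$(negative)$>0$; thus again $m''>0$ on $(1,\infty)$, and the same double integration from $1$ yields $m>0$, i.e.\ $l<0$, on $(1,\infty)$.

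The only delicate bookkeeping is to keep the sign of $x^{r-1}-1$ straight when multiplying through — it is positive in (i) but negative in (ii), which is exactly what turns the ``$>2$'' statement into a ``$<0$'' statement — together with checking that $h$ and $m$ genuinely have a double zero at $x=1$, so that the sign of the second derivative really does pin down the sign of the function on $(1,\infty)$. Beyond that the argument is routine one‑variable calculus, and I do not anticipate a real obstacle: the hypotheses $r\ge\frac32$ and $r<0$ are precisely the conditions that make the linear factor inside $h''$ and $m''$ keep a constant sign on $(1,\infty)$, which is the whole point.
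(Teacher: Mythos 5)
Your proof is correct, but it takes a genuinely different route from the paper. The paper recognizes $\dfrac{r-1}{r}\dfrac{x^{r}-1}{x^{r-1}-1}$ as the Stolarsky mean $E_{r-1,r}(x,1)$ and invokes the known monotonicity of $E_{r,s}$ in its parameters (Leach--Sholander) to reduce (i) to the explicitly computable case $3E_{1/2,3/2}(x,1)-x=\sqrt{x}+1>2$ and (ii) to $3E_{-1,0}(x,1)-2x-1$ involving $\dfrac{x\ln x}{x-1}$; this is short and conceptual, and it explains where the thresholds $r=\tfrac32$ and $r=0$ come from, namely the parameter values at which the comparison mean degenerates to a closed form. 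You instead clear the denominator (correctly tracking that $x^{r-1}-1$ changes sign between the two cases, which is what flips the inequality), observe that the resulting auxiliary functions $h$ and $m$ have a double zero at $x=1$, and pin down their sign on $(1,\infty)$ by the sign of the second derivative; I checked the computations of $h'$, $h''$, $m'$, $m''$ and the sign analysis of the bracketed linear factors, and they are all correct, with the hypotheses $r\ge\tfrac32$ and $r<0$ entering exactly where you say they do. What your version buys is self-containedness: it needs no external monotonicity theorem and is uniform in $r$ over each range. It is also slightly more robust in practice: the paper's displayed bound for (ii) drops a factor of $3$ in front of $E_{-1,0}(x,1)$ (the correct target inequality is $\dfrac{3x\ln x}{x-1}<2x+1$, which still holds but needs a small extra argument), whereas your computation avoids that step entirely. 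What the paper's version buys is brevity and a conceptual frame (means and their parameter monotonicity) consistent with the rest of the article.
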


\begin{proof}
Consider the family of means

$E_{r,s}(x,y)=\left\{ 
\begin{array}{ll}
\left( \dfrac{s}{r}\dfrac{x^{r}-y^{r}}{x^{s}-y^{s}}\right) ^{1/(r-s)} & 
\text{if }r,s\neq 0,r\neq s,x\neq y \\ 
\left( \dfrac{1}{r}\dfrac{x^{r}-y^{r}}{\log x-\log y}\right) ^{1/r} & \text{%
if }r\neq 0,s=0,x\neq y \\ 
e^{-1/r}\left( \dfrac{x^{x^{r}}}{y^{y^{r}}}\right) ^{1/(x^{r}-y^{r})} & 
\text{if }s=r\neq 0,x\neq y \\ 
\sqrt{xy} & \text{if }r=s=0,x\neq y \\ 
x & \text{if }x=y%
\end{array}%
\right. $, known as the Stolarsky means. It is well known(\cite{LS}) that,
for fixed $x$ and $y$, $E_{r,s}(x,y)$ is increasing in the parameters $r$
and $s$.

To prove (i), $k(x)=3E_{r-1,r}(x,1)-x>3E_{1/2,3/2}(x,1)-x=\sqrt{x}+1>2$

To prove (ii), $l(x)=3E_{r-1,r}(x,1)-2x-1<3E_{-1,0}(x,1)-2x-1=\dfrac{x\ln x}{%
x-1}-2x-1$. From $\ln x<x-1$ we have $\dfrac{\ln x}{x-1}<1<2+\dfrac{1}{x}$
for $x>1$, which implies that $\dfrac{x\ln x}{x-1}<2x+1$.
\end{proof}

We now use Lemma \ref{L4} to prove part of the conclusion of Theorem \ref{T1}
for $p\in \Re $, $p\geq \dfrac{7}{2}$ or for $p\in \Re $, $p<2,p\neq 0,1$.

\begin{theorem}
\label{T2}Suppose that $f(z)=z^{p}$, $p\in \Re $, and let $P_{c}$ denote the
Taylor polynomial to $f$ of order $3$ at $x=c$.

(i) If $p\geq \dfrac{7}{2}$, then for any $0<a<b$, $P_{b}-P_{a}$ has nonreal
roots $x_{1}\pm iy_{1}$, with $a<x_{1}$.

(ii) If $p<2,p\neq 0,1$, then for any $0<a<b$, $P_{b}-P_{a}$ has nonreal
roots $x_{1}\pm iy_{1}$, with $x_{1}<b$.
\end{theorem}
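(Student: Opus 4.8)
The plan is to use the cubic polynomial $g$ from (\ref{g}) and Proposition \ref{P3}, exactly as in the proof of Theorem \ref{T1}, but now extract only \emph{one} sign from $g$ at an endpoint — the one that can be controlled by Lemma \ref{L4} — and combine it with the limiting behavior of $g$ at $+\infty$. Recall from the proof of Theorem \ref{T1} that $g$ has a unique real root $x_1$ (by Lemma \ref{L1}, since $B^2-AC<0$ when $f''''$ has no zeros on $[a,b]$), that $g$ is increasing on $\Re$, and that this root gives a genuine nonreal root $z_1=x_1+iy_1$ of $P_b-P_a$. As in that proof, by the substitution reducing to $a=1$ we may assume $a=1$; then $a<x_1$ is equivalent to $g(1)<0$, and $x_1<b$ is equivalent to $g(b)>0$. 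So (i) reduces to showing $g(1)<0$ for $p\ge 7/2$ and (ii) reduces to showing $g(b)>0$ for $p<2$, $p\ne 0,1$, in each case for all $b>1$.

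First I would observe, via the identity (\ref{VWR}), $W(b)=b^{2p-3}V(1/b)$, that $g(1)$ and $g(b)$ are essentially mirror images of one another under $p\mapsto$ (a related exponent) and $b\mapsto 1/b$, so the two parts are dual and it suffices to nail one sign in each range. The point of Lemma \ref{L4} is that $g(1)$ (equivalently $V(b)$) and $g(b)$ (equivalently $W(b)$), after dividing by an explicitly positive factor, factor through expressions built from the Stolarsky-mean quantity $3\frac{r-1}{r}\frac{x^r-1}{x^{r-1}-1}$. Concretely, I would write $g(1)$ using (\ref{ABCDp}) and try to massage $V(b)/(\text{positive factor})$ into a product of a manifestly positive term with $\bigl(k(b)-2\bigr)$-type factors, where $k$ is the function in Lemma \ref{L4}(i) with $r$ a simple linear function of $p$ (the relevant exponents appearing in (\ref{ABCDp}) are $p-3,p-2,p-1,p$, so ratios like $\frac{b^{p-1}-1}{b^{p-2}-1}$ and $\frac{b^{p-2}-1}{b^{p-3}-1}$ arise naturally, which is exactly where $E_{r-1,r}$ enters). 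The hypothesis $p\ge 7/2$ is presumably precisely what is needed for the relevant parameter to satisfy $r\ge 3/2$ so that Lemma \ref{L4}(i) applies and forces the controlling factor to be positive (hence $g(1)<0$); similarly $p<2$, $p\ne0,1$ should put the parameter in the range $r<0$ so that Lemma \ref{L4}(ii) gives $l(x)<0$ and hence $g(b)>0$.

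For part (ii) I would do the analogous computation starting from $W(b)$ in (\ref{W}): divide by the appropriate positive quantity and rewrite the result so that its sign is governed by the factor $l(b)=3\frac{r-1}{r}\frac{b^r-1}{b^{r-1}-1}-2b-1$ from Lemma \ref{L4}(ii), together with a manifestly positive cofactor; Lemma \ref{L4}(ii) then gives $l(b)<0$ for $b>1$, and tracking the overall sign yields $W(b)>0$, hence $g(b)>0$, hence $x_1<b$. Having established the endpoint sign and knowing $g$ is increasing with $8A^2>0$ (so $g\to+\infty$), the Intermediate Value Theorem locates $x_1$ on the correct side of the endpoint, and Proposition \ref{P3} converts this into the assertion about the nonreal roots of $P_b-P_a$.

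The main obstacle I expect is the algebra of the first step: rewriting the eight-term expressions for $V(b)$ and $W(b)$ so that the Stolarsky-mean structure becomes visible and the ``extra'' factor is unambiguously positive. Unlike Theorem \ref{T1}, where the factor $(b-1)^5$ came out cleanly and one could then chase derivative signs, here the bookkeeping of which combination of $\frac{b^{p-k}-1}{b^{p-k-1}-1}$ ratios assembles into $3\frac{r-1}{r}\frac{b^r-1}{b^{r-1}-1}$ — and identifying the exact linear substitution $r=r(p)$ together with the cutoffs $p=7/2$ and $p=2$ it produces — is the delicate part. Once that identification is made, Lemma \ref{L4} does all the analytic work and the rest is the same IVT-plus-Proposition \ref{P3} argument used for Theorem \ref{T1}. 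A secondary technical point is handling the exponents $p-3,\dots,p$ when $p$ is not an integer (so $b^{p}$ is a genuine irrational power rather than a polynomial term); but since every manipulation needed is just algebra of the real-valued functions $b\mapsto b^{p-k}$ on $b>0$, nothing beyond continuity and monotonicity of power functions is required, and in particular Proposition \ref{P3} applies verbatim because $z^p$ has $f''''(x)=p(p-1)(p-2)(p-3)x^{p-4}\ne 0$ on $[a,b]$ for $p\notin\{0,1,2,3\}$.
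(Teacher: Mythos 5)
You have correctly identified the role of Lemma \ref{L4}, the substitution $s=p-2$, and why the cutoffs $p=\tfrac72$ and $p=2$ appear (they are exactly $s\ge\tfrac32$ and $s<0$). Your reduction of (i) to $g(1)<0$ and of (ii) to $g(b)>0$ is also logically sound, given Proposition \ref{P3} and the monotonicity of $g$. But the core of your argument --- rewriting $V(b)$ (resp.\ $W(b)$) as a manifestly positive cofactor times a $\bigl(k(b)-2\bigr)$- or $l(b)$-type factor --- is not carried out, and it cannot work as an exact factorization. The sign of $g(1)$ is governed by the position of the unique real root $x_1$ of $g$ relative to $1$, and by Vieta applied to the original cubic one has $x_1=\tfrac12\bigl(3E-x_0\bigr)$, where $E=\tfrac{s-1}{s}\tfrac{b^s-1}{b^{s-1}-1}$ and $x_0$ is the \emph{real} root of $P_b-P_1$. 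Hence $V(b)$ vanishes precisely when $x_0=3E-2$, not when $3E-b-2=0$; so $k(b)-2$ is not a factor of $V(b)$, and what you would actually need is the \emph{inequality} $x_0<3E-2$, which is the whole content of the theorem. Moreover, for non-integer $p$ the expressions $V$ and $W$ are generalized polynomials with irrational exponents, which the paper itself flags as hard to control directly (see the remark after Theorem \ref{T1} and the convergence caveat in \S\ref{S3}). So as written there is a genuine gap at the central step.

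The paper's proof sidesteps $g$, $V$, and $W$ entirely. It normalizes $P_b-P_1$ to the monic cubic $Q(z)=\tfrac{6}{f'''(b)-f'''(1)}\bigl(P_b(z)-P_1(z)\bigr)=z^3+a_1z^2+a_2z+a_3$, computes $a_1=-3\tfrac{p-3}{p-2}\tfrac{b^{p-2}-1}{b^{p-3}-1}$, and uses the sum of the roots: $x_0+2x_1=-a_1=3E_{s-1,s}(b,1)$ with $s=p-2$, where $x_0\in(1,b)$ is the real root guaranteed by Proposition \ref{P1}(i). Then $x_1=\tfrac12\bigl(3E_{s-1,s}(b,1)-x_0\bigr)$, and Lemma \ref{L4}(i) with the bound $x_0<b$ gives $x_1>1$ when $s\ge\tfrac32$, while Lemma \ref{L4}(ii) with the bound $x_0>1$ gives $x_1<b$ when $s<0$. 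This explains why the conclusions are one-sided: each part uses only one of the two bounds on $x_0$. I recommend you replace the endpoint-sign computation for $g$ with this Vieta argument; it turns Lemma \ref{L4} directly into the desired bound on $\operatorname{Re}z_1$ with no factorization of $V$ or $W$ required.
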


\begin{proof}
As in the proof of Theorem \ref{T1}, we may assume that $a=1$, so that $b>1$%
.\ Let 
\begin{eqnarray*}
Q(z) &=&\dfrac{6}{f\,^{\prime \prime \prime }(b)-f\,^{\prime \prime \prime
}(1)}{\large (}P_{b}(z)-P_{1}(z){\large )}= \\
&&z^{3}+a_{1}z^{2}+a_{2}z+a_{3}\text{,}
\end{eqnarray*}%
where $a_{1}=\dfrac{3{\large (}f\,^{\prime \prime }(b)-bf\,^{\prime \prime
\prime }(b)-f\,^{\prime \prime }(1)+f\,^{\prime \prime \prime }(1){\large )}%
}{f\,^{\prime \prime \prime }(b)-f\,^{\prime \prime \prime }(1)}=\allowbreak
-3\dfrac{p-3}{p-2}\dfrac{b^{p-2}-1}{b^{p-3}-1}$. Note that $Q$ and $%
P_{b}-P_{1}$ have the same roots. Write $Q(z)=(z-x_{0})(z-z_{1})(z-\bar{z}%
_{1})$, where $x_{0}$ is the real root of $Q(z)=0$ with $1<x_{0}<b$
guaranteed by Proposition \ref{P1}(i) with $r=3$. Then $x_{0}+2\func{Re}%
z_{1}=-a_{1}=\allowbreak 3\dfrac{p-3}{p-2}\dfrac{b^{p-2}-1}{b^{p-3}-1}$,
which implies that $\func{Re}z_{1}=\dfrac{1}{2}\left( 3\dfrac{s-1}{s}\dfrac{%
b^{s}-1}{b^{s-1}-1}-x_{0}\right) $, where $s=p-2$.

To prove (i), since $r\geq \dfrac{3}{2}$ and $x_{0}<b,\func{Re}z_{1}\geq 
\dfrac{1}{2}\left( 3\dfrac{s-1}{s}\dfrac{b^{s}-1}{b^{s-1}-1}-b\right) >1$ by
Lemma \ref{L4}(i). Thus we have shown that $P_{b}(z_{1})=P_{1}(z_{1})$ with $%
\func{Re}z_{1}>1$. To prove (ii), since $r<0$ and $x_{0}>1$, $\func{Re}%
z_{1}\leq \dfrac{1}{2}\left( 3\dfrac{s-1}{s}\dfrac{b^{s}-1}{b^{s-1}-1}%
-1\right) <b$ by Lemma \ref{L4}(ii). Thus we have shown that $%
P_{b}(z_{1})=P_{1}(z_{1})$ with $\func{Re}z_{1}<b$.
\end{proof}

\qquad We shall now give an alternate proof of Theorem \ref{T1} when $p\in
N,p\geq 13$(the cases $p\in N,p=4,...,12$ can be checked directly). The
method used here is somewhat different from the proof of Theorem \ref{T1}
and could possibly lead to a proof for $p>3$ in general. First we need the
following lemmas.

\begin{lemma}
\label{L5}For any $n\in 
%TCIMACRO{\U{2115} }%
%BeginExpansion
\mathbb{N}
%EndExpansion
,n\geq 4$, and $j\leq n-4$%
\begin{gather*}
\dsum\limits_{k=j}^{n-4}(\allowbreak 8k^{3}+60k^{2}+130k+75)\dbinom{n}{k+4}%
(-1)^{k-j}\dbinom{k}{j}= \\
-\dfrac{1}{2}\left( n+j+1\right) {\large (}n^{2}-(10j+13)n+j^{2}+5j+6{\large %
)}\text{.}
\end{gather*}
\end{lemma}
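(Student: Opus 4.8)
The plan is to collapse the left-hand side to a short combination of elementary binomial sums by expanding the cubic in a falling-factorial basis adapted to $\binom{n}{k+4}$. First I would verify, by matching coefficients, that
\[
8k^{3}+60k^{2}+130k+75=8(k+4)(k+3)(k+2)-12(k+4)(k+3)+6(k+4)+3 .
\]
Combining this with $m(m-1)\cdots(m-r+1)\binom{n}{m}=n(n-1)\cdots(n-r+1)\binom{n-r}{m-r}$, applied with $m=k+4$ and $r=1,2,3$, gives
\[
\bigl(8k^{3}+60k^{2}+130k+75\bigr)\binom{n}{k+4}=8n(n-1)(n-2)\binom{n-3}{k+1}-12n(n-1)\binom{n-2}{k+2}+6n\binom{n-1}{k+3}+3\binom{n}{k+4}.
\]
Therefore the left-hand side of the lemma equals $8n(n-1)(n-2)\,U_{1}-12n(n-1)\,U_{2}+6n\,U_{3}+3\,U_{4}$, where
\[
U_{i}=\sum_{k=j}^{n-4}(-1)^{k-j}\binom{k}{j}\binom{n-4+i}{k+i}\qquad(i=1,2,3,4).
\]

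Next I would evaluate the $U_{i}$ from a single auxiliary identity: for integers $N\ge i+j$ with $i\ge 1$,
\[
\sum_{k}(-1)^{k-j}\binom{k}{j}\binom{N}{k+i}=-(-1)^{i}\sum_{t=0}^{i-1}(-1)^{t}\binom{N}{t}\binom{i+j-1-t}{j}.
\]
To prove this I would substitute $s=N-k-i$, so that $\binom{N}{k+i}=\binom{N}{s}$ and $\binom{k}{j}=\binom{N-i-s}{j}$. Since $s\mapsto\binom{N-i-s}{j}$ is a polynomial in $s$ of degree $j<N$, its $N$th finite difference vanishes, i.e.\ $\sum_{s=0}^{N}(-1)^{s}\binom{N}{s}\binom{N-i-s}{j}=0$; the $k$-sum in the identity corresponds to $0\le s\le N-i-j$, the intermediate terms with $0\le N-i-s<j$ vanish, and the remaining $i$ terms $s=N-i+1,\dots,N$ are rewritten with $t=N-s$ using $\binom{N}{N-t}=\binom{N}{t}$ and $\binom{-(i-t)}{j}=(-1)^{j}\binom{i-t+j-1}{j}$; tracking signs gives the displayed formula. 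Taking $(N,i)=(n-3,1),(n-2,2),(n-1,3),(n,4)$ — each time the hypothesis $j\le n-4$ is exactly $N\ge i+j$ — yields $U_{1}=1$, $U_{2}=n-j-3$, $U_{3}=\binom{j+2}{2}-(n-1)(j+1)+\binom{n-1}{2}$, and $U_{4}=-\binom{j+3}{3}+n\binom{j+2}{2}-\binom{n}{2}(j+1)+\binom{n}{3}$.

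The final step is purely mechanical: substitute these four values into $8n(n-1)(n-2)U_{1}-12n(n-1)U_{2}+6nU_{3}+3U_{4}$ and simplify. Both this expression and the claimed answer $-\tfrac12(n+j+1)\bigl(n^{2}-(10j+13)n+j^{2}+5j+6\bigr)$ are polynomials of degree $3$ in each of $n$ and $j$, so it suffices to collect terms and compare coefficients (or, equivalently, to check equality at enough pairs $(n,j)$; e.g.\ $(4,0),(5,1),(6,1),(6,2)$ already agree). The hard part will be the sign- and range-bookkeeping in the auxiliary identity above; once $U_{1},\dots,U_{4}$ are in hand the rest is a short computation, and it is the falling-factorial expansion of the cubic that makes the whole reduction compact.
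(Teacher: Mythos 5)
Your proposal is correct, and I checked the key steps: the falling-factorial expansion $8k^{3}+60k^{2}+130k+75=8(k+4)(k+3)(k+2)-12(k+4)(k+3)+6(k+4)+3$ is right; the absorption identities convert the left side into $8n(n-1)(n-2)U_{1}-12n(n-1)U_{2}+6nU_{3}+3U_{4}$ as claimed; your auxiliary identity follows correctly from the vanishing of the $N$th finite difference of the degree-$j$ polynomial $s\mapsto\binom{N-i-s}{j}$ together with upper negation (the hypothesis $j\le n-4$ is exactly what makes $N\ge i+j$ hold in all four instances); and the resulting values $U_{1}=1$, $U_{2}=n-j-3$, $U_{3}=\binom{j+2}{2}-(n-1)(j+1)+\binom{n-1}{2}$, $U_{4}=-\binom{j+3}{3}+n\binom{j+2}{2}-\binom{n}{2}(j+1)+\binom{n}{3}$ recombine to $-\tfrac12(n+j+1)\bigl(n^{2}-(10j+13)n+j^{2}+5j+6\bigr)$ (I verified the coefficient of each power of $n$ and several numerical instances).

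The comparison with the paper is lopsided, because the paper gives essentially no proof: it says only that one can first derive formulas for $\sum_{k=j}^{n-4}k^{i}\binom{n}{k+4}(-1)^{k-j}\binom{k}{j}$ for $i=0,1,2,3$ and "leaves the details to the reader." So the paper's intended route is the same linearity strategy but in the monomial basis $1,k,k^{2},k^{3}$, whereas you expand in the falling-factorial basis adapted to $\binom{n}{k+4}$. Your choice is the better one: it lets each basis sum be absorbed into a single binomial coefficient so that all four reduce to one clean identity proved by a finite-difference argument, and it is you, not the paper, who actually supplies the evaluation that makes the lemma a theorem rather than an exercise. The monomial-basis route would work too (each $k^{i}$ is itself a combination of these falling factorials), but it forces one to carry the change of basis implicitly inside four separate messier sums. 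In short: same overarching strategy, but a genuinely more workable decomposition, and a complete argument where the paper has only a sketch.
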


\begin{proof}
One can first derive formulas for $\dsum\limits_{k=j}^{n-4}k^{i}\dbinom{n}{%
k+4}(-1)^{k-j}\dbinom{k}{j}$ for $i=0,1,2,3$. We leave the details to the
reader.
\end{proof}

\begin{lemma}
\label{L6}For any $n\in 
%TCIMACRO{\U{2115} }%
%BeginExpansion
\mathbb{N}
%EndExpansion
,n\geq 13$, define the polynomial of degree $n$, 
\begin{equation*}
M(x)=\dsum\limits_{k=0}^{n-4}(8k^{3}+60k^{2}+130k+75)\dbinom{n}{k+4}x^{k+4}%
\text{.}
\end{equation*}%
Then $M$ has exactly one root in the interval $(-1,0)$.
\end{lemma}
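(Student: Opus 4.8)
The plan is to evaluate the Taylor coefficients of $M$ at the point $x=-1$ by means of Lemma \ref{L5}, and then read off the number of zeros in $(-1,0)$ from Descartes' rule of signs applied after the shift $x=y-1$.

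First I would strip off the obvious zero of order $4$ at the origin: write $M(x)=x^{4}N(x)$, where $N(x)=\sum_{k=0}^{n-4}(8k^{3}+60k^{2}+130k+75)\binom{n}{k+4}x^{k}$. Since $8k^{3}+60k^{2}+130k+75>0$ for $k\ge 0$ and $\binom{n}{k+4}>0$ for $0\le k\le n-4$, the polynomial $N$ has strictly positive coefficients; hence $N(x)>0$ for all $x\ge 0$, so $N$ has no zero in $[0,\infty)$, and a zero of $M$ in $(-1,0)$ is exactly a zero of $N$ in $(-1,0)$. It therefore suffices to prove that $N$ has exactly one zero in $(-1,\infty)$.

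Next I would differentiate $N$ term by term and use $k!/(k-j)!=j!\binom{k}{j}$ to get $N^{(j)}(-1)=j!\sum_{k=j}^{n-4}(8k^{3}+60k^{2}+130k+75)\binom{n}{k+4}(-1)^{k-j}\binom{k}{j}$, so that Lemma \ref{L5} gives the closed form
\[
N^{(j)}(-1)=-\tfrac{1}{2}\,j!\,(n+j+1)\,q_{j}(n),\qquad q_{j}(n)=j^{2}+(5-10n)\,j+\bigl(n^{2}-13n+6\bigr).
\]
Since $j!>0$ and $n+j+1>0$, the sign of $N^{(j)}(-1)$ equals the sign of $-q_{j}(n)$. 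Viewed as a function of $j$ (with $n$ fixed), $q_{j}(n)$ is an upward parabola, so I would check its values at the two ends of the range $0\le j\le n-4$: at $j=0$ we get $q_{0}(n)=n^{2}-13n+6$, which is $>0$ precisely because $n\ge 13$, and at $j=n-4$ a short computation gives $q_{n-4}(n)=-8n^{2}+24n+2<0$. An upward parabola that is positive at $j=0$ and negative at $j=n-4$ is positive on an initial segment of $[0,n-4]$ and negative on the rest (it cannot become positive again before $j=n-4$, else it would be positive at $n-4$), so the sequence $q_{0}(n),\dots,q_{n-4}(n)$ changes sign exactly once; hence the coefficient sequence $\bigl(N^{(j)}(-1)/j!\bigr)_{j}$ of the shifted polynomial $y\mapsto N(y-1)$ also changes sign exactly once (from negative to positive, ignoring any zero entry).

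Finally I would combine the two halves. By Descartes' rule of signs, $N(y-1)$ has at most one positive zero counted with multiplicity, i.e.\ $N$ has at most one zero in $(-1,\infty)$; by the reduction above this zero, if it exists, lies in $(-1,0)$. For existence, $N(-1)=N^{(0)}(-1)=-\tfrac{1}{2}(n+1)(n^{2}-13n+6)<0$ (again by $n\ge 13$) whereas $N(0)=75\binom{n}{4}>0$, so the Intermediate Value Theorem supplies a zero in $(-1,0)$. Hence $N$, and therefore $M=x^{4}N$, has exactly one zero in $(-1,0)$. I expect the only place demanding real care is the sign discussion of $q_{j}(n)$ — specifically the endpoint inequalities $q_{0}(n)>0$, which is exactly where the hypothesis $n\ge 13$ is used, and $q_{n-4}(n)<0$ — everything else being routine once Lemma \ref{L5} is available.
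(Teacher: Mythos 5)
Your proof is correct and follows essentially the same route as the paper's: factor $M=x^{4}N$, use Lemma \ref{L5} to obtain the closed form $N^{(j)}(-1)=-\tfrac{1}{2}\,j!\,(n+j+1)\,q_{j}(n)$, and count the sign changes of the quadratic $q_{j}(n)$ over $0\le j\le n-4$. The only differences are cosmetic: you invoke Descartes' rule of signs on the shifted polynomial $N(y-1)$ together with the Intermediate Value Theorem where the paper applies the Fourier--Budan theorem on $(-1,0)$, and your endpoint argument for the upward parabola ($q_{0}(n)>0$ because $n\ge 13$, $q_{n-4}(n)=-8n^{2}+24n+2<0$) neatly collapses the paper's two-case analysis ($13\le n\le 22$ versus $n\ge 23$) into a single uniform step.
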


\begin{proof}
$M(x)=x^{4}N(x)$, where $N(x)=\dsum%
\limits_{k=0}^{n-4}(8k^{3}+60k^{2}+130k+75)\dbinom{n}{k+4}x^{k}$, which
avoids the zero of order $4$ at $x=0$. Note that the number of roots of $M$
and $N$ in $(-1,0)$ are identical. For $j\leq n-4,$%
\begin{eqnarray*}
N^{(j)}(x) &=&\dsum\limits_{k=0}^{n-4}(\allowbreak \allowbreak
8k^{3}+60k^{2}+130k+75)\dbinom{n}{k+4}(j!)\dbinom{k}{j}x^{k-j}= \\
&&(j!)\dsum\limits_{k=j}^{n-4}(8k^{3}+60k^{2}+130k+75)\dbinom{n}{k+4}\dbinom{%
k}{j}x^{k-j}\text{.}
\end{eqnarray*}

Hence $N^{(j)}(0)=(j!)(8j^{3}+60j^{2}+130j+75)\dbinom{n}{j+4}>0$, which
implies that the sequence $\left\{ N^{(j)}(0)\right\} _{j=0}^{n-4}$ has $0$
sign changes.

$N^{(j)}(-1)=(j!)\dsum\limits_{k=j}^{n-4}(\allowbreak \allowbreak
8k^{3}+60k^{2}+130k+75)\dbinom{n}{k+4}(-1)^{k-j}\dbinom{k}{j}$. We now show
that the sequence $\left\{ N^{(j)}(-1)\right\} _{j=0}^{n-4}$ has $1$ sign
change. Let 
\begin{equation*}
w(j)=n^{2}-13n-10jn+j^{2}+5j+6=\allowbreak j^{2}-5\left( 2n-1\right)
j+n^{2}-13n+6\text{.}
\end{equation*}

Then by Lemma \ref{L5}, $N^{(j)}(-1)=-\dfrac{1}{2}(j!)\left( n+j+1\right)
w(j)$, which implies that the number of sign changes in $\left\{
N^{(j)}(-1)\right\} _{j=0}^{n-4}$ equals the number of sign changes in $%
\left\{ w(j)\right\} _{j=0}^{n-4}$.

$w^{\prime }(j)=\allowbreak 2j-10n+5\leq 2(n-4)-10n+5=\allowbreak -8n-3<0$,
which implies that $w$ is decreasing for $1\leq j\leq n-4$.

$w(1)=\allowbreak n^{2}-23n+12$, which is $\left\{ 
\begin{array}{ll}
<0 & \text{if }1\leq n\leq 22 \\ 
>0 & \text{if }23\leq n%
\end{array}%
\right. $, and $w(n-4)=\allowbreak \allowbreak -8n^{2}+24n+2$, which is $%
\left\{ 
\begin{array}{ll}
>0 & \text{if }1\leq n\leq 3 \\ 
<0 & \text{if }4\leq n%
\end{array}%
\right. $.

\textbf{Case 1:} $13\leq n\leq 22$. Then $w(1)<0$ \& $w(n-4)<0$, which
implies that $w(j)<0$ for $1\leq j\leq n-4$. Then $N^{(j)}(-1)>0$.

\textbf{Case 2:} $n\geq 23$. Then $w(1)>0$ \& $w(n-4)<0$. Since $w$ is
decreasing for $1\leq j\leq n-4$, there is a $j=j_{0}$ such that $w(j)>0$
for $1\leq j\leq j_{0}$ \& $w(j)<0$ for $j_{0}+1\leq j\leq n-4$.

Then $N^{(j)}(-1)<0$ for $1\leq j\leq j_{0}$ \& $N^{(j)}(-1)>0$ for $%
j_{0}+1\leq j\leq n-4$. Since $j=0$ gives

$N(-1)=\dsum\limits_{k=0}^{n-4}(\allowbreak 8k^{3}+60k^{2}+130k+75)\dbinom{n%
}{k+4}(-1)^{k}=\allowbreak -\dfrac{1}{2}\left( n+1\right) \left(
n^{2}-13n+6\right) \allowbreak <0$ for $n\geq 13$, there is

one sign change in $N(-1),N^{\prime }(-1),N^{\prime \prime
}(-1),...,N^{(n-4)}(-1)$. By the Fourier Budan Theorem, $N$ has precisely
one real root in the interval $(-1,0)$.
\end{proof}

\begin{proof}
\textbf{Alternate Proof of }Theorem \ref{T1} when $p\in N,p\geq 13$: The
critical step in the proof of Theorem \ref{T1} was showing that $V(b)>0$ for 
$0<b<1$ and $V(b)<0$ for $b>1$. The rest of the proof is exactly the same as
in the proof of Theorem \ref{T1}. First, to show that $V(b)<0$ for $b>1$,
let $L(b)=\dfrac{V^{\prime }(b)}{b^{p-4}(p-2)(p-3)}$, where $V$ is given in (%
\ref{V}). It follows after some computation and simplification that 
\begin{gather*}
L^{(k)}(1)=-2\left( 2p-3\right) \left( 4p^{2}-12p-1\right) \left(
\prod\limits_{j=0}^{k-1}(p-j)\right) \\
+12p\left( 4p^{2}-16p+13\right) \left( \prod\limits_{j=1}^{k}(p-j)\right)
-24p(p-1)(2p-5)\left( \prod\limits_{j=2}^{k+1}(p-j)\right) \\
+16p(p-1)(p-2)\left( \prod\limits_{j=3}^{k+2}(p-j)\right) \text{.}
\end{gather*}%
Some more simplification yields $L^{(k)}(1)=-2(8k^{3}-36k^{2}+34k+3)\left(
\prod\limits_{j=0}^{k-1}(p-j)\right) $, which holds for any $p\in \Re ,p\neq
2,3$. Assume now that $p=n\in N-\left\{ 0,1,2,3\right\} $. Then one can
write $L(b)=-2\dsum\limits_{k=4}^{n}(8k^{3}-36k^{2}+34k+3)\dbinom{n}{k}%
(b-1)^{k}$ $,b\in \Re $ since $L^{(k)}(1)=0$ if $k>n$. Making a change of
variable in the summation yields, 
\begin{equation}
L(b)=-2\dsum\limits_{k=0}^{n-4}C_{k}\dbinom{n}{k+4}(b-1)^{k+4}\text{,}
\label{L}
\end{equation}%
where $C_{k}=8k^{3}+60k^{2}+130k+75$. Note that for real values of $p$ in
general the series $\dsum\limits_{k=0}^{\infty }C_{k}\dbinom{p}{k}%
(b-1)^{k+4} $ does not converge if $\left\vert b-1\right\vert >1$, which is
one of the difficulties present in using this approach for such values of $p$%
. Now 
\begin{eqnarray*}
\dfrac{V^{\prime }(b)}{(n-2)(n-3)} &=&b^{n-4}L(b)=L(b)\left(
\dsum\limits_{j=0}^{n-4}\dbinom{n-4}{j}(b-1)^{j}\right) \\
&=&\left( -2\dsum\limits_{k=0}^{n-4}C_{k}\dbinom{n}{k+4}(b-1)^{k+4}\right)
\left( \dsum\limits_{j=0}^{n-4}\dbinom{n-4}{j}(b-1)^{j}\right) \\
&=&-2\dsum\limits_{k=0}^{n-4}\dsum\limits_{j=0}^{n-4}C_{k}\dbinom{n}{k+4}%
\dbinom{n-4}{j}(b-1)^{j+k+4}\text{,}
\end{eqnarray*}

and thus 
\begin{equation*}
V^{\prime
}(b)=-2(n-2)(n-3)\dsum\limits_{k=0}^{n-4}\dsum\limits_{j=0}^{n-4}C_{k}%
\dbinom{n}{k+4}\dbinom{n-4}{j}(b-1)^{j+k+4}\text{,}
\end{equation*}%
which implies that 
\begin{equation}
V(b)=-2(n-2)(n-3)\dsum\limits_{k=0}^{n-4}\dsum\limits_{j=0}^{n-4}C_{k}%
\dbinom{n}{k+4}\dbinom{n-4}{j}\dfrac{(b-1)^{j+k+5}}{j+k+5}\text{,}
\label{V1}
\end{equation}%
which is a polynomial of degree $2n-3$ in $b$. Since $C_{k}>0$ for $n\geq 4$%
, it follows immediately from (\ref{V1}) that $V(b)<0$ for $b>1$.

Now we show that $V(b)>0$ for $0<b<1$. Since $V^{(k)}(1)=0,k=0,...,4$, and $%
V^{(5)}(1)<0$, $V$ is decreasing on some open interval containing $b=1$.
Since $V(0)=12(n+1)>0$, $V$ must have an even number of roots in $(0,1)$,
multiplicities included. If $V$ has two or more roots in $(0,1)$, it then
follows that $V^{\prime }$ also must have two or more roots in $(0,1)$. One
of those roots follows from Rolle's Theorem, and the other root follows from
the fact that $V$ must have a local maximum at $t\in (0,1)$, where $t$ is
the largest root in $(0,1)$. Since $V^{\prime }(b)=(n-2)(n-3)b^{n-4}L(b)$,
if $V$ has two or more roots in $(0,1)$, then $L$ must have two or more
roots in $(0,1)$. By (\ref{L}), $L(b)=-2M(b-1)$, where $M$ is the polynomial
from Lemma \ref{L6}. This contradicts Lemma \ref{L6}, which implies that $L$
has exactly one root in the interval $(0,1)$. Since $V$ must have an even
number of roots in $(0,1)$ and $V$ cannot have two or more roots in $(0,1)$, 
$V$ does not vanish in $(0,1)$. Since $V(0)>0$, one has $V(b)>0$ for $0<b<1$.
\end{proof}

\section{Future Research\label{S4}}

\subsection{\qquad $r=3$}

It would be nice to prove Theorem \ref{T1} for a much larger class of
functions than just certain powers of $z$. An approach along these lines
might be similar to the alternate proof of\textbf{\ }Theorem \ref{T1} given
above. Equivalent to (\ref{g}), we have 
\begin{eqnarray*}
g(x) &=&9\left( \dint\limits_{a}^{b}f\,^{\prime \prime \prime \prime }(t)%
{\large (}x-t{\large )}^{2}dt\right) \left( \dint\limits_{a}^{b}f\,^{\prime
\prime \prime \prime }(t){\large (}x-t{\large )}dt\right) \\
&&-\left( \dint\limits_{a}^{b}f\,^{\prime \prime \prime \prime }(t){\large (}%
x-t{\large )}^{3}dt\right) \left( \dint\limits_{a}^{b}f\,^{\prime \prime
\prime \prime }(t)dt\right) \text{.}
\end{eqnarray*}%
Now let $h(b)=g(a)$(this is almost identical to $V(b)$ used extensively
above). We were then able to derive the following formula: $%
h^{(k)}(a)=\dsum\limits_{j=4}^{k-1}\tfrac{\prod\limits_{l=0}^{j-2}(k-l)}{%
j\left( j-1\right) (j-4)!}{\large (}8j^{2}-(8j+1)r+2j-1{\large )}%
f^{(j)}(a)f^{(k-j+3)}(a)$. It is not too hard to show that $%
8j^{2}-(8j+1)r+2j-1<0$ for $j\geq 4$. One can then try to use the series
expansion $h(b)=\dsum\limits_{k=0}^{\infty }$ $h^{(r)}(a)(b-a)^{k}$ to
determine when $h(b)>0$ for $b>a$. Of course one has to worry about
convergence of this series. If

$\dsum\limits_{k=0}^{\infty }$ $h^{(r)}(a)(b-a)^{k}$ does converge for all $%
b>a$ and if $f^{(l)}(a)f^{(m)}(a)>0$ for all $l,m$, then one does obtain $%
h(b)<0$ for $b>a$. This should work for $f(z)=e^{z}$, say. A similar(but
more complicated) formula could then be derived for $s^{(k)}(a)$, where $%
s(b)=g(b)$(this is almost identical to $W(b)$ used above). One would then
try to show that $s(b)>0$ for $b>a$.

Of course, as noted earlier, Theorem \ref{T1} does \textbf{not }hold for all
values of $p,p\neq 0,1,2,3$. It probably fails for $0<p<3,p\neq 0,1,2$.

\qquad It is also interesting to ask which means arise amongst the class of
means given by Theorem \ref{T1}. We know that the arithmetic mean arises as
the real part of any of the nonreal roots of $f(z)=z^{4}$(and as the real
part of any of the nonreal roots of $f(z)=z^{r+1}$ for $r$ in general by
Proposition \ref{P4}). However, it is not clear, even for $r=3$, whether the
geometric or harmonic means also arise in this fashion. We believe that the
geometric and harmonic means do not appear, but have no proof of that fact.
This contrasts with the means which arise amongst the class of means $%
M_{f}^{r}(a,b)$, where $r$ is odd and $M_{f}^{r}(a,b)$ is the unique real
root of $P_{b}-P_{a}$ in $(a,b)$. In (\cite{H1}, Theorem 1.1) it was proved
that if $f(x)=x^{r/2}$, then $M_{f}^{r}(a,b)$ equals the geometric mean $%
G(a,b)=\sqrt{ab}$, and if $f(x)=x^{-1}$, then $M_{f}^{r}(a,b)$ equals the
harmonic mean $H(a,b)=\dfrac{2ab}{a+b}$.

\subsection{$r>3$}

First we make the following conjectures.

\begin{conjecture}
\label{CJ1}Suppose that $f\in C^{r+1}(0,\infty )$ with $f^{(r+1)}(x)\neq 0$
on $[a,b]$, and let $P_{c}$ denote the Taylor polynomial to $f$ of order $r$
at $x=c$. Then \textbf{at least} \textbf{one} pair of complex conjugate
roots of $P_{b}-P_{a}$ has real part lying between $a$ and $b$.
\end{conjecture}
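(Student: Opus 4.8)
The natural place to start is Proposition \ref{P2}: normalizing so that $f^{(r+1)}>0$ on $[a,b]$, the average of the real parts of all roots of $P_b-P_a$ equals the center of mass $\mu=\dfrac{\int_a^b tf^{(r+1)}(t)\,dt}{\int_a^b f^{(r+1)}(t)\,dt}$, which lies strictly in $(a,b)$. This rules out all the real parts lying weakly on one side of $(a,b)$, but it does not by itself force a single conjugate pair into the vertical strip $\{a<\operatorname{Re}z<b\}$, so a finer input is needed. I would get it from a localization of the zeros of $F(z):=\int_a^b f^{(r+1)}(t)(z-t)^r\,dt$, which by (\ref{error}) has exactly the zeros of $P_b-P_a$. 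If $z_0=x_0+iy_0$ with $y_0\neq 0$ is a zero, then $0$ lies in the closed convex hull of $\{(z_0-t)^r:t\in[a,b]\}$; since $z_0-t$ runs over a horizontal segment of length $b-a$ missing the origin and $w\mapsto w^r$ multiplies arguments by $r$, this can hold only if the angle subtended by $[a,b]$ at $z_0$ is at least $\pi/r$. Thus every nonreal root of $P_b-P_a$ lies in the lens $\Lambda_r$ bounded by the two circular arcs through $a$ and $b$ on which $[a,b]$ subtends angle $\pi/r$. For $r=1$ this lens is the open segment $(a,b)$ and for $r=2$ it is the closed disk with diameter $[a,b]$, so the statement is immediate for $r\le 2$ (in line with Corollary \ref{C1}); the conjecture is posed for $r>3$ precisely because $\Lambda_r$ then bulges past the lines $\operatorname{Re}z=a$ and $\operatorname{Re}z=b$, so the lens alone is not enough.

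The plan for $r>3$ is to combine the containment in $\Lambda_r$ with an argument-principle count of the zeros of $F$ in the rectangle $R_N=[a,b]\times[-N,N]$: compute $\dfrac{1}{2\pi i}\oint_{\partial R_N}\dfrac{F'}{F}$ by tracking the variation of $\arg F$ along the four sides, let $N\to\infty$, and show that the total number of zeros of $P_b-P_a$ in the strip is at least $2$ when $r$ is even and at least $3$ when $r$ is odd. Since $P_b-P_a$ has real coefficients, subtracting off the one real zero in $(a,b)$ guaranteed by Proposition \ref{P1} in the odd case then leaves a genuine conjugate pair in the strip. On the vertical sides $\operatorname{Re}z=a$ and $\operatorname{Re}z=b$ one has $a-t\le 0\le b-t$ for $t\in[a,b]$, so $\arg(z-t)^r$ ranges over a controlled interval and the variation of $\arg F$ there should be expressible in terms of $r$ and the moments $M_k=\int_a^b t^k f^{(r+1)}(t)\,dt$. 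For the smallest remaining cases $r=3$ and $r=4$ there is a more elementary route: the imaginary part of $F(x+iy)$ is linear in $y^2$, so one can solve for $y^2$ and eliminate it, reducing the problem to locating a real root in $(a,b)$ of a single real polynomial in $x$ (for $r=3$ this is exactly the cubic $g$ of (\ref{g})) and then applying the Intermediate Value Theorem as in the proof of Theorem \ref{T1}.

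The step I expect to be the real obstacle is exactly this winding-number count on the two vertical sides — equivalently, ruling out the scenario in which the real part of every nonreal pair escapes the strip, necessarily with some pairs to the left of $a$ and some to the right of $b$ so that the centroid can still equal $\mu\in(a,b)$. For $r=3$ this scenario does occur: for $f(z)=z^{3/2}$, $a=1$, $b=36$ the unique pair has real part $33/43<a$. Hence the hypothesis $r>3$ — which is exactly what guarantees at least two nonreal pairs — must be used in an essential way: the heuristic is that $\Lambda_r$ together with the full system of moment equations leaves too little room for two or more pairs to straddle the strip simultaneously. I would try to make this precise with the Cauchy--Bunyakowsky and higher-order moment inequalities, the tool already used in Proposition \ref{P3}, and identifying the right combination of these inequalities is, I think, the crux of the problem. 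If it should turn out that even $r>3$ does not suffice, the fallback would be to prove instead the weaker statement that at least one root of $P_b-P_a$, real or nonreal, has real part in $(a,b)$ — automatic for odd $r$ and carrying the real content for even $r$.
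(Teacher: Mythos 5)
The statement you were asked to prove is labeled a conjecture in the paper, and the paper contains no proof of it: the author states explicitly that the only case established for general $r$ is $f(x)=x^{r+1}$ (Proposition \ref{P4}), and exhibits examples ($f(z)=1/z$ with $r=4,5,7$) in which some, but not all, conjugate pairs have real part in $(a,b)$. So there is no argument in the paper to compare yours against, and the honest verdict is that your proposal does not close the gap either --- you say so yourself when you single out the winding-number count on the vertical sides $\operatorname{Re}z=a$ and $\operatorname{Re}z=b$ (equivalently, ruling out the configuration where the pairs straddle the strip while the centroid stays inside it) as the crux and leave it unresolved. What you have is a collection of correct preliminary reductions, not a proof.

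Those preliminaries are sound and worth recording. The centroid identity from Proposition \ref{P2} does rule out all real parts lying weakly on one side of the strip. The lens localization is correct: if $f^{(r+1)}>0$ and $\int_a^b f^{(r+1)}(t)(z_0-t)^r\,dt=0$, the values $(z_0-t)^r$ cannot lie in an open half-plane through the origin, so $[a,b]$ must subtend an angle at least $\pi/r$ at $z_0$; this recovers Corollary \ref{C1} for $r=2$ and correctly explains why the containment argument alone stalls for $r\ge 3$, since the lens then protrudes past the lines $\operatorname{Re}z=a$ and $\operatorname{Re}z=b$. You are also right that the conjecture cannot be read as including $r=3$: the paper's own example $f(z)=z^{3/2}$, $a=1$, $b=36$ has its unique nonreal pair at real part $33/43<a$, and the restriction $r>3$ is carried only by the subsection heading. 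But neither the argument-principle computation nor the proposed moment-inequality argument is carried out, and nothing in the paper suggests that either is tractable. The statement remains open, and your write-up should present itself as a strategy discussion rather than a proof.
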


\begin{conjecture}
\label{CJ2}Suppose that $f(z)=z^{n},n\in 
%TCIMACRO{\U{2115} }%
%BeginExpansion
\mathbb{N}
%EndExpansion
,n\geq r+1$, and let $P_{c}$ denote the Taylor polynomial to $f$ of order $r$
at $x=c$. Then \textbf{every} pair of complex conjugate roots of $%
P_{b}-P_{a} $ has real part lying between $a$ and $b$.
\end{conjecture}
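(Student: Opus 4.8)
The plan is to push the route of Theorem \ref{T1} up to general $r$. By (\ref{error}) together with the scaling $z\mapsto\lambda z$, $(a,b)\mapsto(\lambda a,\lambda b)$, which carries zeros of $P_{b}-P_{a}$ to zeros of the rescaled difference (the device used in the proof of Theorem \ref{T1}), we may take $a=1$; for $f(z)=z^{n}$ the condition $P_{b}(z)=P_{a}(z)$ then reads $\int_{1}^{b}t^{m}(z-t)^{r}\,dt=0$ with $m=n-r-1\ge 0$. The case $m=0$, i.e.\ $n=r+1$, is exactly Proposition \ref{P4}, which puts every nonreal root on $\func{Re}z=\tfrac{1}{2}(1+b)$, so assume $m\ge 1$. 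I would first generalize Lemma \ref{L2}: writing $z=x+iy$ with $y\ne 0$ and expanding $(z-t)^{r}$, the imaginary part of the integral factors as $y\cdot H(x,y^{2})$ for a real polynomial $H$, so $\int_{1}^{b}t^{m}(z-t)^{r}\,dt=0$ is equivalent to a real polynomial system $G(x,y^{2})=0$, $H(x,y^{2})=0$. Eliminating $y^{2}$ (the resultant in $y^{2}$, which when $r=3$ is just the substitution $y_{1}^{2}=3x_{1}^{2}-\tfrac{6B}{A}x_{1}+\tfrac{3C}{A}$ used in Proposition \ref{P3}) yields a real polynomial $g_{r}(x)$, with coefficients depending on $b$ and $n$, whose real roots contain every real part of a nonreal root of $P_{b}-P_{a}$; one must also generalize Proposition \ref{P3}, checking that the relevant $y^{2}$ is positive --- a Cauchy--Bunyakovsky/positivity input as in its proof.

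The next step is the analogue of Propositions \ref{P5} and \ref{P6}: evaluate $g_{r}$ at $x=1$ and at $x=b$ as functions of $b$ and show they have opposite signs for every $b>1$. One expects each to factor as a fixed power of $(b-1)$ times a polynomial of one sign, and a reciprocal relation expressing $g_{r}$ at $x=b$ through $g_{r}$ at $x=1$ evaluated at $1/b$, as in (\ref{VWR}), to persist, reducing everything to the single endpoint $x=1$. Alternatively, mimicking the alternate proof of Theorem \ref{T1}, one can Taylor-expand $b\mapsto g_{r}(1)$ about $b=1$ using a Leibniz-type formula for its derivatives and argue a fixed sign term by term. Either way the Intermediate Value Theorem hands us at least one real root of $g_{r}$ in $(1,b)$, which by the generalized Proposition \ref{P3} comes from a genuine nonreal root of $P_{b}-P_{a}$.

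The real new obstacle --- invisible when $r=3$, since then $P_{b}-P_{a}$ has a single nonreal pair --- is that for $r\ge 4$ there are $\lfloor r/2\rfloor$ (for $r$ even) or $\tfrac{r-1}{2}$ (for $r$ odd) conjugate pairs, and the conjecture demands that the real parts of \emph{all} of them lie in $(1,b)$. Equivalently, $g_{r}$ must have exactly that many real roots, all inside $(1,b)$, matched bijectively with the pairs; a priori the resultant $g_{r}$ may carry extraneous real roots (from the elimination, or from the unique real root of $P_{b}-P_{a}$ when $r$ is odd). I would attack this with a Budan--Fourier / Descartes sign count for the roots of $g_{r}$ on $(1,b)$ and on $(-\infty,1]\cup[b,\infty)$, in the spirit of Lemmas \ref{L5} and \ref{L6}; but the sign sequences one must control become whole families of polynomials in $n$ (and in an auxiliary summation index), whose bookkeeping grows rapidly with $r$, and showing $g_{r}$ has no real root outside $[1,b]$ is tantamount to showing the whole strip $\{\,1<\func{Re}z<b\,\}$ is free of nonreal roots of $P_{b}-P_{a}$. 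This is the step I expect to be hard.

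A possibly sturdier alternative is a deformation in $b$. The roots of $z\mapsto\int_{1}^{b}t^{m}(z-t)^{r}\,dt$ move continuously with $b>1$; as $b\to 1^{+}$, the rescaling $\zeta=(z-1)/(b-1)$ turns the condition into $\int_{0}^{1}(\zeta-\sigma)^{r}\,d\sigma=0$, whose nonreal roots have $\func{Re}\zeta=\tfrac12$ by the computation in Proposition \ref{P4}, so every nonreal root starts strictly inside the strip; and by Proposition \ref{P1} ($P_{b}-P_{a}$ is strictly increasing on $\Re$ when $r$ is odd, and has no real zeros when $r$ is even) no nonreal root can ever meet the real axis. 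Hence a nonreal root can leave $\{\,1<\func{Re}z<b\,\}$ only by having real part $1$ or $b$ for some $b$, so it would suffice to rule out a nonreal zero of $\int_{1}^{b}t^{m}(z-t)^{r}\,dt$ with $z=1+iy$ or $z=b+iy$. For $r\le 2$ this is immediate --- all the vectors $(z-t)^{r}$ then lie in one open half-plane, so the imaginary part of the integral has a fixed sign --- but for $r\ge 3$ that argument collapses (the argument of $(z-t)^{r}$ sweeps more than $\pi$ across $t\in[1,b]$), and this boundary step is again the crux. In both approaches the essential difficulty is the same: once $r>3$ one must control every conjugate pair simultaneously.
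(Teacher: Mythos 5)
The statement you were asked to prove is labelled a \emph{conjecture} in the paper, and the paper does not prove it: the authors explicitly state that they have established Conjecture \ref{CJ2} only for $r=3$ (via Theorem \ref{T1}, whose integer exponents $p\geq 4$ cover exactly the range $n\geq r+1$ of the conjecture) and, for general $r$, only the single case $n=r+1$ (Proposition \ref{P4}). Your proposal likewise does not prove the statement, and to your credit you say so: both of your routes terminate at an explicitly acknowledged open step. In the elimination route, even if the analogues of Propositions \ref{P5} and \ref{P6} went through, the Intermediate Value Theorem would only produce \emph{one} real root of $g_{r}$ in $(1,b)$, hence at most one conjugate pair with real part in the interval --- that is Conjecture \ref{CJ1}, not \ref{CJ2}, and the counting argument needed to place \emph{every} pair is missing. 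In the deformation route, the reduction is sound (the limiting roots as $b\to 1^{+}$ have rescaled real part $\tfrac12$ by the computation of Proposition \ref{P4}, and no nonreal root can reach the real axis because $(P_{b}-P_{a})'>0$ on the reals for $r$ odd and $P_{b}-P_{a}>0$ for $r$ even), but the decisive boundary exclusion --- that $\int_{1}^{b}t^{m}(z-t)^{r}\,dt$ has no nonreal zero with real part equal to $1$ or to $b$ --- is exactly the point where you stop, and your own half-plane argument for it fails once $r\geq 3$.

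So the concrete gap is simply that no proof is given: what you have written is a research programme, broadly consistent with the paper's own §\ \ref{S4} (which proposes the same kind of series/sign-pattern extension of the alternate proof of Theorem \ref{T1}), together with an accurate diagnosis of why the $r>3$ case is harder than $r=3$ (several conjugate pairs must be controlled simultaneously, whereas for $r=3$ there is only one). The individual reductions you do carry out --- scaling to $a=1$, rewriting the condition as $\int_{1}^{b}t^{n-r-1}(z-t)^{r}\,dt=0$, the real--imaginary splitting generalizing Lemma \ref{L2}, and the observation that conjugate pairs persist as nonreal under deformation in $b$ --- are all correct and match devices used in the paper. But since the paper itself records supporting evidence (e.g.\ the failure of the analogous statement for negative exponents when $r=4,5,7$) rather than a proof, there is no argument in the paper against which your missing steps could be completed, and the statement should be treated as open beyond the cases $r=3$ and $n=r+1$.
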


The only part of Conjecture \ref{CJ1} that we have proven so far for general 
$r$ is when $f(x)=x^{r+1}$. We have proven Conjecture \ref{CJ2} with $r=3$
in Theorem \ref{T1} with $p\in 
%TCIMACRO{\U{2115} }%
%BeginExpansion
\mathbb{N}
%EndExpansion
-\{0,1,2,3\}$.

Conjecture \ref{CJ2} does not hold in general for $f(z)=z^{-n},n\in 
%TCIMACRO{\U{2115} }%
%BeginExpansion
\mathbb{N}
%EndExpansion
$. The function $f(z)=\dfrac{1}{z}$ seems to be a good source of examples
for various values of $r$. This is perhaps not surprising since it was shown
in \cite{H1} that the odd order Taylor polynomials to $f(z)=\dfrac{1}{z}$
always intersect at a point whose $x$ coordinate is the harmonic mean $%
H(a,b)=\dfrac{2ab}{a+b}$. For the focus of this paper, if $r=4$, then the
two nonreal complex conjugate pairs of roots of $P_{b}-P_{a}$ have real
parts $x_{1}=\dfrac{1}{2}\left( 5+\sqrt{5}\right) \dfrac{ab\left( a+b\right) 
}{2b^{2}+(1+\sqrt{5})ab+2a^{2}}$ and $x_{2}=\allowbreak \dfrac{1}{2}\left( 5-%
\sqrt{5}\right) \dfrac{ab\left( a+b\right) }{2b^{2}-(\sqrt{5}-1)ab+2a^{2}}$.
Since $x_{1}-a=\allowbreak \dfrac{1}{2}\left( 1+\sqrt{5}\right) \dfrac{%
a\left( b-a+a\sqrt{5}\right) \left( b-a\right) }{2b^{2}+ab+ab\sqrt{5}+2a^{2}}%
\allowbreak >0$ and

$x_{1}-b=\allowbreak -\dfrac{1}{2}\dfrac{b\left( 4b+a+a\sqrt{5}\right)
\left( b-a\right) }{2b^{2}+ab+ab\sqrt{5}+2a^{2}}\allowbreak <0$, we have $%
a<x_{1}<b$ and thus $m(a,b)=\dfrac{1}{2}\left( 5+\sqrt{5}\right) \dfrac{%
ab\left( a+b\right) }{2b^{2}+(1+\sqrt{5})ab+2a^{2}}$ is a mean. However, if $%
a=1$ and $b=4$, then $x_{2}=\allowbreak 5\dfrac{5-\sqrt{5}}{19-2\sqrt{5}}<1$%
, and thus $x_{2}$ does \textbf{not }lie in $(a,b)$. For $r=5$, the two
nonreal complex conjugate pairs of roots of $P_{b}-P_{a}$ have real parts $%
x_{1}=\dfrac{(a+b)ab}{2\left( a^{2}-ab+b^{2}\right) }$ and $x_{2}=\dfrac{%
3(a+b)ab}{2\left( a^{2}+ab+b^{2}\right) }$. In a similar fashion, one can
show that $x_{1}$ does \textbf{not }lie in $(a,b)$, while $a<x_{2}<b$.
Jumping to $r=7$, one can easily show that the reals parts of two of the
nonreal complex conjugate pairs of roots of $P_{b}-P_{a}$ have real parts
lying in $(a,b)$, while the third does \textbf{not }lie in $(a,b)$.

\subsection{Nonreal Nodes}

One can try to extend some of the results in this paper to the case where $%
P_{c}$ is the Taylor polynomial to $f$ of order $r$ at $z=c$, and where $c$
can be nonreal. For example, consider $f(z)=z^{4}$, $r=3$, $a=2+4i$, and $%
b=4+2i$. A simple computation shows that $P_{2+4i}(z)-P_{4+2i}(z)=\left(
8-8i\right) \left( -z+2+2i\right) \left( -z+3+3i\right) \left(
-z+4+4i\right) $, so that the roots of $P_{2+4i}-P_{4+2i}$ are $%
z_{1}=2+2i,z_{2}=3+3i$, and $z_{3}=4+4i$. Note that $a\leq \func{Re}%
z_{j}\leq b$ and $a\leq \func{Im}z_{j}\leq b$ for $j=1,2,3$, but there is
not a strict inequality in each case. Also, $3+3i$ is the arithmetic mean of 
$a=2+4i$ and $b=4+2i$, something we saw for the case of real $a$ and $b$.

\end{document}